\documentclass[12pt]{amsart}
\usepackage{lmodern}

\usepackage[T1]{fontenc}
\usepackage[latin9]{inputenc}
\usepackage{geometry}
\geometry{verbose,tmargin=3cm,bmargin=3cm,lmargin=3.3cm,rmargin=3.3cm}
\synctex=-1
\usepackage{amsthm}
\usepackage{amsbsy}
\usepackage{amstext}
\usepackage{amssymb}
\usepackage{esint}

\makeatletter

\newcommand{\noun}[1]{\textsc{#1}}

\numberwithin{equation}{section}
\numberwithin{figure}{section}
\theoremstyle{plain}
\newtheorem{thm}{\protect\theoremname}
  \theoremstyle{plain}
  \newtheorem{prop}[thm]{\protect\propositionname}
  \theoremstyle{plain}
  \newtheorem{lem}[thm]{\protect\lemmaname}
  \theoremstyle{plain}
  \newtheorem{cor}[thm]{\protect\corollaryname}


\usepackage{mathrsfs}
\usepackage{wrapfig}
\usepackage[all]{xy}
\usepackage{bbm}
\newcommand{\one}{\mathbbm{1}}
\newcommand{\ti}[1]{\tilde {#1}}
\newcommand{\eps}{\varepsilon}
\newcommand{\defeq}{\stackrel{\rm{def}}{=}}

\newcommand{\cM}{{\mathcal M}}

\newcommand{\cO}{{\mathcal O}}


\newcommand{\scC}{\mathscr C}
\newcommand{\scL}{\mathscr L}
\newcommand{\scP}{\mathscr P}


\newcommand{\bU}{\boldsymbol U}
\newcommand{\bV}{\boldsymbol V}

\newcommand{\bga}{\boldsymbol{\gamma}}
\newcommand{\bbe}{\boldsymbol{\beta}}

\newcommand{\N}{{\mathbb N}}
\newcommand{\R}{{\mathbb R}}

\newcommand{\supp}{\operatorname {supp}}
\newcommand{\e}{\operatorname{e}}

\renewcommand{\i}{\operatorname{i}}
\renewcommand{\top}{\operatorname{top}}

\newcommand{\inj}{\operatorname{inj}}

  \providecommand{\corollaryname}{Corollary}
  \providecommand{\lemmaname}{Lemma}
  \providecommand{\propositionname}{Proposition}
\providecommand{\theoremname}{Theorem}
\usepackage{amssymb}

\makeatother

  \providecommand{\corollaryname}{Corollary}
  \providecommand{\lemmaname}{Lemma}
  \providecommand{\propositionname}{Proposition}
\providecommand{\theoremname}{Theorem}

\begin{document}

\title{exponential gaps in the length spectrum}

\author{Emmanuel Schenck}

\address{Laboratoire d'analyse, géométrie et applications, Université Paris
13, CNRS UMR 7539, 93430 Villetaneuse, France.}

\email{schenck@math.univ-paris13.fr }
\begin{abstract}
We present a separation property for the gaps in the length spectrum
of a compact Riemannian manifold with negative curvature. In arbitrary
small neighborhoods of the metric for some suitable topology, we show
that there are negatively curved metrics with a length spectrum exponentially
separated from below. This property was previously known to be false
generically. 
\end{abstract}
\maketitle

\section{Introduction}

The Anosov property of the geodesic flow on a compact manifold with
negative curvature provides a great abundance of closed geodesics
: the counting function satisfies 
\begin{equation}
\sharp\{\gamma:\ \ell(\gamma)\leq T\}\sim\frac{\e^{h_{\top}T}}{2h_{\top}T},\quad T\to+\infty,\label{eq: Margulis}
\end{equation}
where $h_{\top}>0$ denotes the topological entropy of the flow and
$\ell(\gamma)$ the length of a closed, unoriented geodesic $\gamma$.
We refer the reader to \cite{Mar69,MarSha04_anosov,PaPo90} for comprehensive
studies of periodic orbits for hyperbolic systems.

For generic metrics in negative curvature, distinct closed geodesics
have distinct lengths \cite{Abr68} : as a result, the length spectrum
$\scL$ contains exponentially many points in an interval $I(T)$
of fixed size centered at $T\to+\infty$. This suggests naturally
to study the distribution of nearby lengths in $\scL$ in this asymptotic
limit : see \cite{Dol98,Ana00,PolSha01,PetSto12_closed} and references
given there for refinements of the counting estimate \eqref{eq: Margulis}. 

The length spectrum is also of particular interest when studying the
connexions between the geometry of $M$ and the spectrum of its Laplace-Beltrami
operator $\Delta_{g}$ for the metric $g$ : if closed geodesics are
isolated and non-degenerate in the sense of Morse, it is well known
since \cite{CdV73,Chaz74,DuGu75} that the singular support of the
tempered distribution $\mbox{Trace}(\e^{\i t\sqrt{\Delta_{g}}})$
is given by the period spectrum $\scP=\{k\ell,\ \ell\in\scL,k\in\N\}$.
Such distributional traces have been used widely since then in various
direct and inverse spectral problems. 

\medskip{}

A box principle used with equation \eqref{eq: Margulis} shows that
in $\scL\cap I(T)$, there are many gaps of size at least $\e^{-h_{\top}T}$
when $T$ is large. However, nothing excludes a priori some over-exponential
clustering in the length spectrum : in this case it would not be possible
to control the gaps uniformly from below. Some interesting results
concerning this question have been obtained by Dolgopyat and Jakobson
in \cite{DolJak16_gaps}. To precise the problem of estimating the
gaps in $\scL$ from below, we will say that the length spectrum is
\emph{exponentially separated }if there exist $C,\nu>0$ with the
property that 
\begin{equation}
\forall\ell,\ell'\in\scL\ \mbox{with}\ \ell\neq\ell',\qquad|\ell-\ell'|\geq C\e^{-\nu\max(\ell,\ell')}.\label{eq: expo separation}
\end{equation}
In \cite{DolJak16_gaps} it is established that the length spectrum
of hyperbolic manifolds is exponentially separated as soon as the
fundamental group have algebraic generators. The authors note that
it is always the case for finite volume hyperbolic manifolds of dimension
$n\geq3$, see \cite{GarRag70}. They also show that for compact hyperbolic
surfaces, \eqref{eq: expo separation} is true for a dense set in
the corresponding Teichmüller space.

In variable negative curvature, Theorem 4.1 in \cite{DolJak16_gaps}
gives a rather different picture : \eqref{eq: expo separation} is
false for a $G_{\delta}$-dense set of metrics for the $C^{k}$-topology,
$k>3$. Hence metrics with a length spectrum which is not exponentially
separated are topologically generic, and then dense.

In this note, we provide a complementary result : in variable negative
curvature, the set of metrics satisfying \eqref{eq: expo separation}
is \emph{also }dense for the $C^{k}$-topology, with $k\geq2$. To
state our result more precisely, we assume that $(M,g_{0})$ is a
closed manifold of dimension $\geq2$ with negative curvature, and
that $g_{0}$ is of class $C^{r}$ with $r\geq2$. Write the bounds
of the sectional curvature $K(g_{0})$ as 
\[
-k_{0}\leq K(g_{0})\leq-k_{1},\qquad k_{0},k_{1}>0.
\]
Fix some integer $k\in[2,r]$, $ $ and for $\eps_{0}>0$ define the
set of Riemannian metrics 
\[
\cM_{k}(\eps_{0})\defeq\{g\,:\ \|g-g_{0}\|_{C^{k}}<\eps_{0}\}.
\]
Let us define also 
\begin{equation}
-K_{0}=\inf_{g\in\cM_{k}(\eps_{0})}K(g),\quad-K_{1}=\sup_{g\in\cM_{k}(\eps_{0})}K(g),\quad\kappa=\sqrt{|K_{0}|},\quad h=h_{\top}\sqrt{1+\eps_{0}}.\label{eq: def kappa}
\end{equation}
The constant $\eps_{0}$ will always be fixed and chosen small enough
so that $K_{1}>0$, which is possible as the sectional curvature is
a continuous function of the metric in the $C^{2}$-topology. Our
main result can be stated as follows: 
\begin{thm}
\label{thm: split} Let $M$ be a closed Riemannian manifold equipped
with a $C^{r}$ metric $g_{0}$ with negative curvature, $r\geq2$.
Take $k\in[2,r]$ and $\eps_{0}>0$ as above. For some arbitrary $\eps>0$,
define 
\[
\nu_{k}=(k+2)h+(k+1)\kappa+\eps.
\]
There is a metric $g\in\cM_{k}(\eps_{0})$ and $C=C(g_{0},\eps_{0},\eps,k)>0$
such that for any distinct $\ell,\ell'\in\scL_{g}$, 
\[
|\ell-\ell'|\geq C\e^{-\nu_{k}\max(\ell,\ell')}.
\]

\end{thm}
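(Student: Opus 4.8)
The plan is to construct $g$ by a diagonal procedure, as a $C^k$-small perturbation of $g_0$ realized as a limit $g=\lim_{j\to\infty}g_j$ with $g_j=g_{j-1}+h_j$, where the perturbation $h_j$ at stage $j$ separates the lengths of all closed geodesics up to a threshold $T_j\uparrow\infty$ while being small enough — both in $C^k$-norm and in the size of its support — not to spoil the separations achieved earlier. For $\eps_0$ small every $g\in\cM_k(\eps_0)$ is negatively pinched with curvature in $[-K_0,-K_1]$, so its geodesic flow is Anosov; by structural stability the closed geodesics of any such $g$ are canonically indexed by the conjugacy classes in $\pi_1(M)$ (each containing a unique closed geodesic), so we may speak of "the geodesic $\gamma$" and of the function $g\mapsto\ell_g(\gamma)$, which is differentiable in $g$ with derivative along a perturbation $h$ equal to the first variation of arclength $\tfrac12\int_\gamma h(\dot\gamma,\dot\gamma)\,ds$, the variation of the shape of $\gamma$ entering only at second order because $\gamma$ is critical for length.

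The heart of the argument is a quantitative one-scale separation step. Fix $T$ and let $\Gamma_T$ be the set of closed geodesics of $g_{j-1}$ of length $\leq T$; by \eqref{eq: Margulis}, valid uniformly over $\cM_k(\eps_0)$ with entropy at most $h$, one has $\#\Gamma_T\leq e^{hT}$ for $T$ large. For each $\gamma\in\Gamma_T$ choose a point $p_\gamma\in\gamma$ and a $C^k$ bump $\psi_\gamma$ in the space of symmetric $2$-tensors, of $C^k$-norm $1$, localized near $p_\gamma$ in $M$ and near $\dot\gamma(p_\gamma)$ in velocity, with support of radius $\rho$; set $h(\tau)=\sum_{\gamma\in\Gamma_T}\tau_\gamma\psi_\gamma$. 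Two competing requirements fix $\rho$. First, the $p_\gamma$ must be chosen so that among the geodesics of $\Gamma_T$ only $\gamma$ itself enters $B(p_\gamma,\rho)$ with velocity near $\dot\gamma(p_\gamma)$; since there are $\leq e^{hT}$ periodic orbits of period $\leq T$ and, by the Anosov property with expansion rate at most $\kappa$, such orbits are resolved in $SM$ at scale $e^{-\kappa T}$, a covering/pigeonhole count produces such $p_\gamma$ provided $\rho=e^{-(h+\kappa+o(1))T}$. Second, with this $\rho$ the union $\bigcup_\gamma\supp\psi_\gamma$ is so small that, by equidistribution of long geodesics, no geodesic of length $\leq T_{j-1}$ meets it for more than a negligible total length. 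Granting this, the supports are disjoint, so $\|h(\tau)\|_{C^k}=\max_\gamma|\tau_\gamma|$, and $\partial_{\tau_{\gamma'}}\ell(\gamma)=\tfrac12\int_\gamma\psi_{\gamma'}(\dot\gamma,\dot\gamma)\,ds$ vanishes for $\gamma\neq\gamma'$ and is $\asymp\rho^{k+1}$ for $\gamma=\gamma'$ (a $C^k$-normalized bump of width $\rho$ has $C^0$-size $\asymp\rho^k$, and $\gamma$ spends length $\asymp\rho$ inside). Hence $\tau\mapsto(\ell(\gamma))_{\gamma\in\Gamma_T}$ is, up to controlled higher-order terms, a diagonal affine map with diagonal entries $\asymp\rho^{k+1}$, and restricting $|\tau_\gamma|\leq\delta$ moves each $\ell(\gamma)$ independently through an interval of length $\asymp\delta\rho^{k+1}$.

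What remains is a one-dimensional placement problem: choose one new length in each of the $\leq e^{hT}$ intervals so that, together with the already-fixed lengths of geodesics of length $\leq T_{j-1}$, all are pairwise $\eta_j$-separated. The forbidden set for any single $\ell(\gamma)$ has measure $\leq(\#\Gamma_T)\,\eta_j\lesssim e^{hT}\eta_j$, so a valid choice exists once $\delta\rho^{k+1}\gtrsim e^{hT}\eta_j$; with $\eta_j=C e^{-\nu_k T_j}$, $\rho=e^{-(h+\kappa+o(1))T_j}$ and $\delta=\delta_j$ of subexponential decay, this is precisely the inequality $\nu_k\geq h+(k+1)(h+\kappa)+\eps=(k+2)h+(k+1)\kappa+\eps$. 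Choosing $T_j\uparrow\infty$ slowly, $\delta_j$ with $\sum_j\delta_j<\eps_0$, and $\delta_j$ small enough that the cumulative effect of stages $>j$ shifts the stage-$j$ lengths by far less than $\eta_j$ — using that $h_i$ changes $\ell_g(\gamma)$ by at most $\tfrac12\|h_i\|_{C^0}\cdot\mathrm{length}(\gamma\cap\supp h_i)$, summably small by the support bound — the limit $g$ lies in $\cM_k(\eps_0)$, is negatively curved, and has $\scL_g$ exponentially separated with exponent $\nu_k$; adjusting $C$ absorbs the finitely many short geodesics and the accumulated errors.

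The main obstacle is the tension built into $\rho$: the exponentially many geodesics at scale $T$ force the separating bumps to be exponentially narrow, and the $C^k$-constraint then makes them $C^0$-small of order $\rho^k$, so they can shift lengths only by $\asymp\rho^{k+1}$ — the gain in separation is fighting the loss from narrowness, and the exponent $\nu_k$ is exactly the break-even rate. Threading this — proving the covering/pigeonhole statement that yields $\rho=e^{-(h+\kappa+o(1))T}$ with the right constants, and checking that the first-variation estimate and the control on how much later stages disturb earlier ones hold uniformly over $\cM_k(\eps_0)$ with the stated $\kappa$ and $h$ — is where the real work lies; the rest is bookkeeping of $T_j,\delta_j,\eta_j$.
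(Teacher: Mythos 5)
Your overall architecture coincides with the paper's: iterate over length windows $]T_{j-1},T_j]$, find on each closed geodesic of length $\leq T_j$ a point whose small ball meets no other closed geodesic of comparable length, perturb the metric by bumps supported in those balls, and balance the $C^{k}$-cost $\delta\rho^{-(k+1)}$ of a length shift $\delta$ against the total displacement $\e^{hT}\eta$ needed to $\eta$-separate $\e^{hT}$ lengths. The differences (general tensor bumps plus first variation and a measure-theoretic placement, versus the paper's exact conformal dilations $(1+\delta\chi_{0}/r_{0})^{2}g$ and explicit reordering of the $\ell_{i}$) are cosmetic. However, the two load-bearing quantitative steps are asserted rather than proved, and one of them fails as stated.

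First, the isolation claim --- that $p_{\gamma}$ exists with $B(p_{\gamma},\rho)$, $\rho=\e^{-(h+\kappa+o(1))T}$, entered only by $\gamma$ --- is the entire content of the paper's Sections 2--3, and your constant is not the one the available argument yields. The phase-space separation of two distinct closed orbits of length $\leq T$ is only established at scale $\e^{-2\kappa T}$ (the closing-lemma argument in Proposition \ref{prop: Phase-space-separation} costs an extra $\e^{\kappa T}$ beyond naive expansivity); an angular separation $\e^{-2\kappa T}$ at an almost-intersection gives, via Toponogov, a shadowing time $\lesssim\rho\,\e^{2\kappa T}$, and summing over $\e^{hT}$ geodesics and $O(T^{2})$ almost-intersections per pair forces $\rho=\e^{-(h+2\kappa+\eps)T}$. (Your optimistic $\e^{-\kappa T}$ resolution would reproduce the exponent in the theorem's statement, but it is unproved; the paper's own derivation in fact produces $2(k+1)\kappa$.) You also use implicitly that $\gamma$ meets $B(p_{\gamma},\rho)$ in a single arc, which requires controlling self-almost-intersections (Section 3.4 of the paper) and is nowhere addressed. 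Second, your control of how stage-$i$ perturbations disturb the already-separated lengths of stages $j<i$ does not close. Since you only exclude other geodesics entering $B(p_{\gamma},\rho)$ ``with velocity near $\dot\gamma$'', a previously treated geodesic may cross a stage-$i$ support; velocity localization of a symmetric $2$-tensor makes the first variation small but not zero, and a single crossing can shift a length by up to $\|h_{i}\|_{C^{0}}\rho_{i}\asymp\delta_{i}\rho_{i}^{k+1}\asymp\e^{(h-\nu)T_{i}}$, which exceeds $\eta_{j}=\e^{-\nu T_{j}}$ whenever $\nu(T_{i}-T_{j})<hT_{i}$ --- that is, for all stages $i$ not far beyond $j$, exactly the regime your ``slowly increasing $T_{j}$'' puts you in; ``equidistribution'' and ``summability'' cannot rescue an individual term that is too large. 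The paper avoids this entirely by proving \emph{exact} avoidance: the stage-$n$ balls are disjoint from every closed geodesic of length $\leq T_{n+1}$ (Corollary \ref{cor: boules separees}), so earlier lengths are pointwise unchanged. You need that stronger statement, and it is precisely the hard part you deferred. A further minor omission: one must check that no long $g_{j-1}$-geodesic drops into $[0,T_{j}]$ after the perturbation, as in \eqref{eq: higher lengths}.
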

 Let us point out that if $k$ is fixed with $k<r$, the metric $g$
given by our proof of Theorem \ref{thm: split} is not $C^{k'}$ for
$r\geq k'>k$. This is due to the fact that $g$ is obtained as a
limit of $C^{r}$ metrics $(g_{n})_{n\in\N}$ with diverging $C^{k'}$
norm if $k'>k$. Improving the regularity of $g$ without increasing
$\nu_{k}$ , if possible, would probably require a new approach than
the one presented here.

\medskip{}

Some fine control over the gaps in the length spectrum is in particular
important for two well-known spectral problems : precise error terms
for Weyl's laws on a negatively curved manifold \cite{JakPol07,JaPoTo07_low},
and lower bounds for the asymptotic distribution of resonances of
the Laplacian on non-compact manifolds with hyperbolic trapped sets
\cite{GuiZwo99_Tr,Pet02_low}. In both cases, one has to study accurately
the contributions of potentially many periodic orbits of the geodesic
flow to a semiclassical trace formula, which involves the distributional
trace we alluded to above. Periodic orbits are subsets of the unit
tangent bundle $T^{1}M$, and it turns out that they can be exponentially
isolated there : this is a consequence of the expansivity of the geodesic
flow, see also Section \ref{sec:Separation} below. This allows to
overcome a lack of control of the gaps in the length spectrum to establish
a trace formula by microlocalizing near each closed geodesic in the
unit cotangent bundle -- see \cite{JaPoTo07_low}. Such formulæ involve
sums of the form 
\[
\sum_{\gamma}\sum_{k\in\N}A_{\gamma,k}\cos(\lambda k\ell(\gamma))\one_{J(T)}(k\ell(\gamma)),\qquad T\leq\cO(\log\lambda),\ \lambda\to+\infty,
\]
where $A_{\gamma,k}>0$ is related to the Poincaré map of $\gamma$,
and here $J(T)\subset[0,T]$ is some interval. Obtaining lower bounds
for the above sum is one of the main difficulties that stand in the
way of improved Weyl laws for eigenvalues (or resonances). Such estimates
requires in particular the control of the oscillating terms which
are directly connected with the distribution of the gaps in the length
spectrum.

\section{Separation of orbits in phase space\label{sec:Separation}}

Let us start this section by gathering some standard facts and notations
about the geodesic flow $\Phi^{t}:T^{1}M\to T^{1}M$. In the following,
closed geodesics will always be \emph{unoriented}. The set of closed
geodesics for $(M,g)$ with $g\in\cM(\eps_{0})$ will be denoted by
$\scC_{g}$. If $I\subset\R_{+}$ is an interval, we will also make
use of the sets 
\[
\scC_{g}(I)=\{\gamma\in\scC_{g}:\ \ell(\gamma)\in I\},\qquad\scC_{g}(T)=\scC_{g}([0,T]),\ T>0.
\]
If $\rho\in T^{1}M$ and $t\in\R$, we write $\rho(t)\defeq\Phi^{t}(\rho)$.
By $\bga$, we will denote one of the two possible lifts of $\gamma$
in $T^{1}M$, namely $(\gamma(t),\pm\dot{\gamma}(t))_{t\in\R}$ where
$t\mapsto\gamma(t)$ is any parameterization of $\gamma$ by arc-length. 

We equip $T^{1}M$ with the Sasaki metric and write $d_{S}$ for the
induced distance. A fundamental estimate coming from the analysis
of the Green subbundles \cite{Ebe73_Anosov} together with Rauch's
comparison theorem and \eqref{eq: def kappa} yields to 
\begin{equation}
d_{S}(\Phi^{t}(\rho),\Phi^{t}(\rho'))\leq\kappa_{0}\e^{\kappa|t|}d_{S}(\rho,\rho'),\qquad\kappa_{0}\defeq\sqrt{1+\kappa},\label{eq: Bowen growth}
\end{equation}
for all $\rho,\rho'\in T^{1}M$ and $t\in\R$. 

For any $\gamma_{0}\in\scC_{g_{0}}$, the curve $\gamma_{0}\subset M$
is still a closed path for $g$ : since $g\in\cM_{k}(\eps_{0})$ is
negatively curved, there is a unique $g$-geodesic $\gamma$ in the
same free homotopy class. Hence, there is a well defined, bijective
map 
\begin{equation}
f_{g_{0}\to g}:\begin{cases}
\scC_{g_{0}}\to\scC_{g}\\
\gamma_{0}\to\gamma.
\end{cases}\label{eq: bijection of geodesics}
\end{equation}
Furthermore, the $g$-length of $\gamma=f_{g_{0}\to g}(\gamma_{0})$
satisfies 
\begin{equation}
\frac{\ell_{g_{0}}(\gamma_{0})}{\sqrt{1+\eps_{0}}}\leq\ell_{g}(\gamma)\leq\ell_{g_{0}}(\gamma_{0})\sqrt{1+\eps_{0}}.\label{eq: length perturb}
\end{equation}
Also, by a theorem of Klingenberg, $K(g)<0$ implies that the injectivity
radius $r_{\inj}(g)$ is determined by the shortest closed geodesic,
\begin{equation}
r_{\inj}(g)=\frac{1}{2}\min\{\ell(\gamma),\ \gamma\in\scC_{g}\}.\label{eq: mini length}
\end{equation}
Hence \eqref{eq: length perturb} implies 
\[
\forall g\in\cM_{k}(\eps_{0}),\quad r_{\inj}(g)\geq\frac{1}{\sqrt{1+\eps_{0}}}r_{\inj}(g_{0}).
\]
In the following, we fix a number $r_{m}>0$ such that $r_{m}<(1+\eps_{0})^{-1/2}r_{\inj}(g_{0})$.
Finally, if $\gamma\in\scC_{g}$, we define the tubular open neighborhood
of a lift $\bga\subset T^{1}M$ by 
\[
\Theta_{\bga}^{\epsilon}\defeq\{\rho\in T^{1}M,\ d_{S}(\rho,\bga)<\epsilon\}.
\]
The purpose of this section is to prove the following exponential
separation result in $T^{1}M$ : 
\begin{prop}
\label{prop: Phase-space-separation} Let $g\in\cM_{k}(\eps_{0})$.
There is $\epsilon_{0}>0$ depending only on $g_{0},\eps_{0}$ such
that any distinct $\beta,\gamma\in\scC_{g}(T)$ satisfy 
\[
\Theta_{\bbe}^{\epsilon_{0}\e^{-2\kappa T}}\cap\Theta_{\bga}^{\epsilon_{0}\e^{-2\kappa T}}=\emptyset\,,
\]
where $\bbe,\bga$ denote any lifts of $\beta,\gamma$ in $T^{1}M$. 
\end{prop}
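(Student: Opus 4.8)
The plan is to argue by contradiction, exploiting the expansivity of the Anosov geodesic flow together with the uniform growth bound \eqref{eq: Bowen growth}. Suppose $\beta,\gamma\in\scC_g(T)$ are distinct and that the tubes $\Theta_{\bbe}^{\epsilon}$ and $\Theta_{\bga}^{\epsilon}$ intersect for $\epsilon=\epsilon_0 \e^{-2\kappa T}$; pick $\rho$ in the intersection, so there exist points $p\in\bbe$, $q\in\bga$ with $d_S(\rho,p)<\epsilon$ and $d_S(\rho,q)<\epsilon$, hence $d_S(p,q)<2\epsilon$. The idea is that two periodic orbits that remain close for a long time — longer than their periods — must coincide, because the geodesic flow in negative curvature is expansive: there is an expansivity constant $\delta_{\mathrm{exp}}>0$ (depending only on the curvature bounds, hence only on $g_0,\eps_0$) such that if $d_S(\Phi^t(x),\Phi^t(y))\le\delta_{\mathrm{exp}}$ for all $t\in\R$, then $y=\Phi^s(x)$ for some $|s|$ small.

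First I would record that, since $p\in\bbe$ and $q\in\bga$ lie on periodic orbits of periods $\ell(\beta),\ell(\gamma)\le T$, the whole orbit of $p$ is obtained by flowing $p$ over a time interval of length $\le T$, and likewise for $q$. Now apply \eqref{eq: Bowen growth}: for $|t|\le T$ we get
\[
d_S(\Phi^t(p),\Phi^t(q))\le \kappa_0\,\e^{\kappa T}\, d_S(p,q) < 2\kappa_0\,\e^{\kappa T}\,\epsilon_0\,\e^{-2\kappa T}=2\kappa_0\,\epsilon_0\,\e^{-\kappa T}\le 2\kappa_0\,\epsilon_0 .
\]
Next I would promote this to a bound valid for \emph{all} $t\in\R$, not just $|t|\le T$: given arbitrary $t$, write $t = m\,\ell(\beta) + t'$ with $|t'|\le \ell(\beta)\le T$ if we want to track $p$, but the cleaner route is to observe that both orbit closures are compact flow-invariant sets and that the function $t\mapsto d_S(\Phi^t(p),\Phi^t(q))$ is, by periodicity of the two orbits with periods $\le T$, in fact governed by its values on $[-T,T]$ up to a controlled loss — more precisely, any $t$ can be shifted by a suitable multiple of $\ell(\beta)$ into $[0,\ell(\beta)]\subset[0,T]$ at the cost of replacing $q$ by $\Phi^{-k\ell(\beta)}(q)$, which is still a point of $\bga$, and then re-running the estimate above with this new pair of points, whose Sasaki distance is still controlled because $\Phi^{k\ell(\beta)}$ maps $p$ to $p$. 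Choosing $\epsilon_0$ so small that $2\kappa_0\,\epsilon_0\le\delta_{\mathrm{exp}}$, expansivity then forces $q=\Phi^s(p)$ for some small $s$, so $\bbe$ and $\bga$ share a point and hence, being orbits, coincide as subsets of $T^1M$; projecting to $M$, $\beta=\gamma$, contradicting distinctness. (One should also use the injectivity radius lower bound / the constant $r_m$ from the excerpt to ensure that the small time shift $s$ produced by expansivity cannot connect two genuinely different orbits and cannot be mistaken for a nontrivial return.)

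The main obstacle is the bookkeeping in the second step: \eqref{eq: Bowen growth} only gives control of $d_S(\Phi^t(p),\Phi^t(q))$ for $|t|$ up to $O(T)$, whereas the definition of expansivity demands a uniform-in-$t$ bound over all of $\R$. Turning the finite-time closeness into orbit-wise closeness requires genuinely using that both orbits are periodic with period at most $T$ — so that "time $T$" already exhausts a full period of each and the closeness propagates by equivariance — and one must be slightly careful that shifting by multiples of $\ell(\beta)$ to re-enter $[0,T]$ does not accumulate error, which it does not precisely because $\Phi^{\ell(\beta)}$ fixes $p$. A secondary technical point is to make sure the expansivity constant $\delta_{\mathrm{exp}}$ and the constant $\kappa_0$ depend only on $(g_0,\eps_0)$ and not on the particular $g\in\cM_k(\eps_0)$: this follows because both are controlled by the uniform curvature bounds $-K_0\le K(g)\le -K_1$ on the family $\cM_k(\eps_0)$, via Rauch comparison and the standard theory of the stable/unstable (Green) subbundles, exactly as invoked for \eqref{eq: Bowen growth}.
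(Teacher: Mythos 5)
Your overall strategy (contradiction plus hyperbolicity) starts out parallel to the paper: the first step, using \eqref{eq: Bowen growth} to get $d_S(\Phi^t(p),\Phi^t(q))\le 2\kappa_0\epsilon_0\e^{-\kappa T}$ for $|t|\le T$, is exactly the paper's first move. The genuine gap is in the step you yourself flag as the main obstacle: promoting this finite-time closeness to closeness for all $t\in\R$ so that expansivity applies. You propose to reduce an arbitrary $t$ to $[0,\ell(\beta)]$ by replacing $q$ with $\Phi^{-k\ell(\beta)}(q)$, asserting that the Sasaki distance of the new pair ``is still controlled because $\Phi^{k\ell(\beta)}$ maps $p$ to $p$.'' But the difficulty sits on the $q$ side, not the $p$ side: since in general $\ell(\beta)\neq\ell(\gamma)$, the point $\Phi^{k\ell(\beta)}(q)$ is \emph{not} $q$; it drifts along $\bga$ by $k\ell(\beta)\bmod\ell(\gamma)$, and nothing a priori keeps it near $p$. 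For $|k\ell(\beta)|\le T$ you can still bound $d_S(p,\Phi^{k\ell(\beta)}(q))$ by the time-$T$ estimate itself, but to go beyond time $T$ you must restart \eqref{eq: Bowen growth} from the new pair, and each restart multiplies the bound by $\kappa_0\e^{\kappa T}$; since the initial gain is only $\e^{-2\kappa T}$, the bound is destroyed after two periods, and covering all of $\R$ would require unboundedly many restarts. The hypothesis of expansivity is therefore never verified. (The closing remark about the injectivity radius does not repair this: once one knows $q=\Phi^s(p)$ the orbits trivially coincide; the whole difficulty is in reaching that conclusion.)

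The fix is essentially what the paper does: replace global expansivity by the \emph{local uniqueness of periodic points in a flow box} (the uniqueness half of the Anosov closing/shadowing mechanism), which only requires closeness over a single period and is therefore compatible with the finite-time estimate. Concretely, with $l=\ell(\gamma)\ge\ell(\beta)=l'$ and $z\in\bga$, $z'\in\bbe$ close, one uses $\Phi^{l'}(z')=z'$ to get $d_S(\Phi^{l'}(z),z)\le 2\epsilon_0\kappa_0\e^{-\kappa T}$, i.e.\ an approximate return of the $\bga$-orbit at time $l'$; the flow-box argument on a Poincar\'e section of uniform size $\sigma$ then forces the exact return time to equal $l$, whence $l-l'=\cO(\epsilon_0\e^{-\kappa T})<r_m/2$. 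Only then, with the two periods known to be close, does the paper conclude via the homotopy argument of Lemma \ref{lem: Separation Orbits}: two closed curves staying within the injectivity radius of each other over a full period, with nearly equal lengths, are freely homotopic through short geodesic segments, contradicting the uniqueness of the closed geodesic in a free homotopy class in negative curvature. Your proposal is missing both ingredients --- the mechanism that pins down $\ell(\gamma)-\ell(\beta)$, and the final topological step --- so as written it does not close.
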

We begin with a lemma where the lengths of the geodesics are restricted
to a fixed interval. 
\begin{lem}
\label{lem: Separation Orbits} Let $g\in\cM_{k}(\eps_{0})$. For
any distinct $\gamma,\gamma'\in\scC_{g}(T)$ with $|\ell(\gamma)-\ell(\gamma')|<r_{m}/2$,
we have 
\[
\Theta_{\bga}^{\delta\e^{-\kappa T}}\cap\Theta_{\bga'}^{\delta\e^{-\kappa T}}=\emptyset\,,
\]
where $\bga,\bga'$ denote any lifts of $\gamma,\gamma'$ in $T^{1}M$
and $\delta=\frac{r_{m}}{2\kappa_{0}}$. \end{lem}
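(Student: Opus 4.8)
The plan is to argue by contradiction, exploiting the Bowen-type growth estimate \eqref{eq: Bowen growth} to transport a hypothetical intersection point around the two orbits and derive a contradiction with the injectivity radius bound. Suppose $\rho\in\Theta_{\bga}^{\delta\e^{-\kappa T}}\cap\Theta_{\bga'}^{\delta\e^{-\kappa T}}$. Then there are points $\rho_{0}\in\bga$ and $\rho_{0}'\in\bga'$ with $d_{S}(\rho,\rho_{0})<\delta\e^{-\kappa T}$ and $d_{S}(\rho,\rho_{0}')<\delta\e^{-\kappa T}$, hence $d_{S}(\rho_{0},\rho_{0}')<2\delta\e^{-\kappa T}$ by the triangle inequality. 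The idea is that $\rho_{0}$ and $\rho_{0}'$ are now initial points for orbit segments that stay uniformly close for a time comparable to $T$; since both orbits are closed with lengths $\leq T$ and nearly equal, after flowing one full period the two orbits essentially return to their starting configuration, which — once the closeness is propagated down to scale $r_m$ — forces the two closed geodesics to coincide, contradicting $\gamma\neq\gamma'$.

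Concretely, first I would flow $\rho_{0}$ and $\rho_{0}'$ by time $\ell(\gamma')$ (say $\ell(\gamma')\leq\ell(\gamma)$). By \eqref{eq: Bowen growth}, $d_{S}(\Phi^{t}(\rho_{0}),\Phi^{t}(\rho_{0}'))\leq\kappa_{0}\e^{\kappa t}\cdot 2\delta\e^{-\kappa T}$, and since $t=\ell(\gamma')\leq T$ this is at most $2\kappa_{0}\delta = r_m$. Now $\Phi^{\ell(\gamma')}(\rho_{0}')=\rho_{0}'$ because $\bga'$ is a periodic orbit of period $\ell(\gamma')$, while $\Phi^{\ell(\gamma')}(\rho_{0})=:\rho_{1}\in\bga$. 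So we obtain a point $\rho_{1}$ on $\bga$ with $d_{S}(\rho_{1},\rho_{0}')<r_m$; together with $d_{S}(\rho_{0},\rho_{0}')<2\delta\e^{-\kappa T}\leq r_m$ we see that $\bga$ comes within $r_m$ of the point $\rho_{0}'$ at two times whose difference, modulo $\ell(\gamma)$, equals $\ell(\gamma')$ up to the small error $|\ell(\gamma)-\ell(\gamma')|<r_m/2$. Projecting to $M$ via the footpoint map (which is $1$-Lipschitz for the Sasaki metric) and using that $r_m<(1+\eps_0)^{-1/2}r_{\inj}(g_0)\leq r_{\inj}(g)$ by \eqref{eq: mini length} and the paragraph preceding the lemma, the two lifts $\bga,\bga'$ must actually be \emph{the same} orbit: two distinct closed geodesics, or two distinct lifts of one, cannot both pass within the injectivity radius of a common point in a way that is compatible with the flow unless they coincide.

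The step I expect to be the main obstacle is making the last paragraph rigorous, i.e. deducing $\gamma=\gamma'$ (as unoriented geodesics) from the fact that their lifts stay within $r_m<r_{\inj}(g)$ of a common point $\rho_{0}'$ over a time window of length comparable to $\ell(\gamma)$. One clean way: within a ball of radius $r_{\inj}(g)$ the exponential map is a diffeomorphism, so a geodesic segment is determined by any one of its tangent vectors; the estimate $d_{S}(\Phi^{t}(\rho_{0}),\Phi^{t}(\rho_{0}'))<r_m$ for all $t\in[0,\ell(\gamma')]$ then says the two unit-speed geodesics $t\mapsto\Phi^{t}(\rho_{0})$ and $t\mapsto\Phi^{t}(\rho_{0}')$ remain in a common convex neighborhood, and $d_S$ controls \emph{both} footpoint and direction, so they are forced to be equal as oriented geodesics once we check a single fiber — whence $\bga=\bga'$ and $\gamma=\gamma'$. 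I would be slightly careful about whether the near-coincidence is as oriented or unoriented geodesics, but since $\scL_g$ records unoriented lengths and both lifts are allowed, this causes no trouble. The constants are then exactly $\delta=\frac{r_m}{2\kappa_0}$, giving the claimed radius $\delta\e^{-\kappa T}$.
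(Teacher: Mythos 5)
Your setup (contradiction, triangle inequality, propagation of closeness via \eqref{eq: Bowen growth} up to time $T$ at scale $\delta\e^{-\kappa T}$ with $\delta=r_m/(2\kappa_0)$) matches the paper's proof. But the final step is a genuine gap, not merely a rigor issue. From $d_{S}(\Phi^{t}(\rho_{0}),\Phi^{t}(\rho_{0}'))<r_m$ for all $t\in[0,\ell(\gamma')]$ you cannot conclude $\bga=\bga'$ by invoking the exponential map or convex neighborhoods: two \emph{distinct} closed geodesics can perfectly well remain within Sasaki distance $r_m$ of each other (in position \emph{and} direction) over a full period --- indeed the whole point of the lemma is that the true separation scale is exponentially small in $T$, far below $r_{\inj}(g)$. ``$d_S$ controls both footpoint and direction'' only says the orbits are close, never that they are equal; checking ``a single fiber'' gives nothing. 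Likewise the observation that $\bga$ returns $r_m$-close to $\rho_0'$ after time $\ell(\gamma')$ is the starting point of an Anosov-closing-type argument (which the paper does use, but only later, in the proof of Proposition \ref{prop: Phase-space-separation}, via a Poincar\'e section and expansivity); by itself it proves nothing here.

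What the paper actually does to close the argument is topological, and it is also where the hypothesis $|\ell(\gamma)-\ell(\gamma')|<r_m/2$ --- which you quote but never really use --- enters. Writing $l=\ell(\gamma)$, $l'=\ell(\gamma')$, one reparameterizes $\gamma$ as $\beta(s)=\gamma(ls/l')$, $0\le s\le l'$, so that $\beta$ and $\gamma'$ have the same period; the reparameterization moves points along $\gamma$ by at most $|l-l'|<r_m/2$, so the triangle inequality gives $d_g(\gamma'(t),\beta(t))<r_m/2+r_m/2=r_m<r_{\inj}(g)$ for all $t$ (this is why the paper works at scale $r_m/2$ rather than your $r_m$). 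Joining $\gamma'(t)$ to $\beta(t)$ by the unique short geodesic then produces a free homotopy between the closed curves $\gamma'$ and $\beta$, i.e.\ between $\gamma'$ and $\gamma$; since in negative curvature each free homotopy class contains exactly one closed geodesic, $\gamma=\gamma'$, the desired contradiction. Your proposal is missing this uniqueness-in-a-free-homotopy-class input (or any substitute for it, such as the expansivity argument), so as written it does not prove the lemma.
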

\begin{proof}
We borrow the method from \cite{JaPoTo07_low} and argue by contradiction.
Let $\gamma,\gamma'\in\scC_{g}(T)$ be two distinct geodesics with
$|\ell(\gamma)-\ell(\gamma')|<r_{m}/2$ and take two lifts 
\[
\bga(t)=\Phi^{t}(z),\qquad\bga'(t)=\Phi^{t}(z'),\qquad z,z'\in T^{1}M,\ t\in\R.
\]
Suppose that $\Theta_{\bga}^{\delta\e^{-\kappa T}}\cap\Theta_{\bga'}^{\delta\e^{-\kappa T}}\neq\emptyset$
where $\delta$ is as in the lemma. Without loss of generality, we
can assume that $d_{S}(z,z')<\delta\e^{-\kappa T}$. From \eqref{eq: Bowen growth},
we have $d_{S}(z(t),z'(t))<r_{m}/2$ for $|t|\leq T$, and this implies
$d_{g}(\gamma(t),\gamma'(t))<r_{m}/2$. If we reparameterize $\gamma$
by setting 
\[
\beta(s)\defeq\gamma(ls/l'),\quad0\leq s\leq l',
\]
we have by the triangle inequality 
\[
\forall t\in[0,l'],\quad d_{g}(\gamma'(t),\beta(t))\leq r_{m}/2+(t-t\frac{l}{l'})<r_{m}/2+r_{m}/2.
\]
Since $r_{m}<r_{\inj}(g)$, this means that there is an homotopy by
geodesic segments between the closed curves $\gamma'$ and $\beta$.
But $\beta$ being a reparameterization of $\gamma$, this implies
that we would have two distinct closed geodesics in the same free
homotopy class. 
\end{proof}
\emph{Proof of Proposition \ref{prop: Phase-space-separation}. }The
preceding lemma can be generalized for $\beta,\gamma\in\scC_{g}(T)$
as follows. Before, let us recall a classical fact for hyperbolic
flows which is at the basis of the Anosov Closing and Shadowing lemmas.
Denote the dynamical ball 
\[
B_{S}^{t}(\rho_{0},r)\defeq\{\rho\in T^{1}M:\sup_{0\leq s\leq t}d_{S}(\Phi^{s}(\rho),\Phi^{s}(\rho_{0}))<r\}.
\]
Suppose that $\rho_{0}\in T^{1}M$ is a periodic point of period $\tau$
for the geodesic flow. There exist a constant $\sigma(g)>0$, depending
continuously on the metric in the $C^{2}$-topology such that if $\Sigma_{\sigma(g)}(\rho_{0})$
is a transversal section of the flow at $\rho_{0}$ with radius $\sigma(g)$,
then for all $\rho\in\Sigma_{\sigma(g)}(\rho_{0})\cap B_{S}^{\tau}(\rho_{0},\sigma(g))$
we have $\Phi^{\tau}(\rho)\neq\rho$. Namely, $\rho_{0}$ is the unique
periodic point of period $\tau$ in $\Sigma_{\sigma(g)}(\rho_{0})$
sufficiently close to $\rho_{0}$ for the dynamical distance. This
follows from a standard fixed point argument using hyperbolicity and
a sequence of quasi-linear $C^{1}$ approximations of the flow near
the orbit $\rho_{0}(t)$ for $0\leq t\leq\tau$, see \cite[Chapters 6 and 18]{KaHa95}.

Since we consider $g\in\cM_{k}(\eps_{0})$ with $k\geq2$, we can
define 
\[
\sigma\defeq\inf_{g\in\cM_{k}(\eps_{0})}\sigma(g)>0
\]
to get a constant uniform with respect to $g\in\cM_{k}(\eps_{0})$.

Consider now distinct $\beta,\gamma\in\scC_{g}(T)$ and assume for
instance that 
\[
T\geq l\defeq\ell(\gamma)\geq\ell(\beta)\defeq l'.
\]
Fix some $\epsilon_{0}>0$ to be chosen sufficiently small later (in
function of $g_{0},\eps_{0}$), and take two lifts $\bbe,\bga\subset T^{1}M$
of these closed geodesics. Suppose that there are $z\in\bga$, $z'\in\bbe$
such that $d_{S}(z,z')<\epsilon_{0}\e^{-2\kappa T}$. From \eqref{eq: Bowen growth},
we have 
\[
d_{S}(\Phi^{l'}(z),z')=d_{S}(\Phi^{l'}(z),\Phi^{l'}(z'))\leq\epsilon_{0}\kappa_{0}\e^{-\kappa T}.
\]
Hence the point $\Phi^{t}(z)$ stays exponentially close to $\Phi^{t}(z')$
for $t\in[0,l']$. Moreover, 
\begin{equation}
d_{S}(\Phi^{l'}(z),z)\leq d_{S}(\Phi^{l'}(z),z')+d_{S}(z',z)\leq2\epsilon_{0}\kappa_{0}\e^{-\kappa T}.\label{eq: return map}
\end{equation}
Let $\Sigma_{\sigma}(z)$ be a Poincaré section of the flow of size
$\sigma$ centered at $z$. Without loss of generality, we can assume
that $z'\in\Sigma_{\sigma}(z)$. Equation \eqref{eq: return map}
implies that for $\epsilon_{0}$ small enough depending on $g_{0},\eps_{0}$
via $\kappa,\sigma$, the point $\Phi^{l'}(z)$ belongs to a flow
box with section $\Sigma_{\sigma}(z)$ : there is a unique $s\in\R$
such that 
\[
\Phi^{l'+s}(z)=\zeta\in\Sigma_{\sigma}(z)
\]
 where $d_{S}(\zeta,z)\leq C\epsilon_{0}\e^{-\kappa T}$, $|s|\leq C\epsilon_{0}\e^{-\kappa T}$.
Here $C=C(g_{0},\eps_{0})\geq1$ can be chosen uniformly for $g\in\cM_{k}(\eps_{0})$
since the distance $d_{S}$ varies continuously with $g$.

But now, $\zeta$ and $z$ are on the same periodic orbit, so they
are periodic with period $l$ and both belong to $\Sigma_{\sigma}(z)$.
Also, using \eqref{eq: Bowen growth} one more time, we have 
\[
d_{S}(\zeta,z)\leq C\epsilon_{0}\e^{-\kappa T}\ \Rightarrow\ \sup_{[0,l]}d_{S}(\Phi^{t}(\zeta),\Phi^{t}(z))\leq C\epsilon_{0}\kappa_{0}.
\]
Therefore, if $\epsilon_{0}$ is small enough, the right hand side
of the previous equation is $<\sigma$ and we have $\zeta\in B_{S}^{T}(z,\sigma)\cap\Sigma_{\sigma}(z)$.
From the remark above, this forces $\zeta=z$. Up to shrink $\epsilon_{0}$
further, this finally implies 
\[
s=l-l'\leq C\epsilon_{0}\e^{-\kappa T}<r_{m}/2.
\]
Hence we can apply Lemma \ref{lem: Separation Orbits} , in contradiction
with $d_{S}(z,z')<\epsilon_{0}\e^{-2\kappa T}<\delta\e^{-\kappa T}$.

\section{Almost intersections of closed geodesics\label{sec:Almost-intersections}}

In this section we prove the key tool needed in the proof of Theorem
\ref{thm: split}. If $x\in M$ and $r>0$, we denote by $B_{g}(x,r)\subset M$
an open ball of radius $r$ centered at $x$ for the metric $g$. 
\begin{prop}
\label{thm: conformal place} Let $g\in\cM_{k}(\eps_{0})$. Fix $\eps>0$
and take $\alpha=2\kappa+h+\eps$. There is $T_{0}>0$ depending only
on $g_{0},\eps_{0},\eps$ such that for $T\geq T_{0}$ and any $\beta\in\scC_{g}(T)$,
there exists $z\in\beta$ with 
\begin{equation}
\forall\gamma\in\scC_{g}(T)\setminus\beta,\qquad B_{g}(z,\e^{-\alpha T})\cap\gamma=\emptyset,\label{eq: conformal place others}
\end{equation}
 and moreover, $B_{g}(z,\e^{-\alpha T})\cap\beta$ consists in a unique
geodesic segment of $\beta$ centered at $z$. 
\end{prop}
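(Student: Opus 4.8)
The plan is to combine two facts: the phase-space separation of closed geodesics from Proposition \ref{prop: Phase-space-separation}, and the exponential counting bound \eqref{eq: Margulis} (which, up to adjusting constants, gives $\sharp\scC_g(T)\leq C\e^{hT}$ for all $g\in\cM_k(\eps_0)$, since $h_{\top}$ varies continuously and $h=h_{\top}\sqrt{1+\eps_0}$ dominates it). Fix $\beta\in\scC_g(T)$ and let $\bbe\subset T^1M$ be a lift. The idea is to slide a point $z$ along $\bbe$ and count how many other closed geodesics of length $\leq T$ can come $\e^{-\alpha T}$-close to $\beta$ in $M$; the projection $\pi:T^1M\to M$ is Lipschitz for the Sasaki metric, so a base-point at distance $<\e^{-\alpha T}$ from $z$ lifts (via a point on $\bga$) to a point of $T^1M$ at distance comparable to $\e^{-\alpha T}$ from $\bbe$ — provided we also control the fiber direction, which we handle below.

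First I would set up the competing length scales. Proposition \ref{prop: Phase-space-separation} says distinct $\beta,\gamma\in\scC_g(T)$ have disjoint tubes $\Theta^{\epsilon_0\e^{-2\kappa T}}$. So if we thicken $\bbe$ by $\rho_T:=\epsilon_0\e^{-2\kappa T}$, the tube $\Theta_{\bbe}^{\rho_T}$ meets $\Theta_{\bga}^{\rho_T}$ in the empty set, but that is not quite what I need; what I need is: at how many points $z\in\bbe$ (spaced out along the orbit) can some $\gamma\neq\beta$ satisfy $B_g(z,\e^{-\alpha T})\cap\gamma\neq\emptyset$? Cover $\bbe$, a closed curve of length $\ell(\beta)\leq T$, by $N\approx T/\e^{-\alpha T}=T\e^{\alpha T}$ arcs of length $\sim\e^{-\alpha T}$, with centers $z_1,\dots,z_N$. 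For each $\gamma\neq\beta$ with $\ell(\gamma)\leq T$, the set of indices $j$ such that $\gamma$ passes within $\e^{-\alpha T}$ of $z_j$ — I claim this is a bounded number of indices, because $\gamma$ itself is a curve of length $\leq T$ and, by the phase-space separation applied along $\gamma$, the portion of $\gamma$ near $\beta$ cannot be too long. More carefully: if $\gamma$ enters $B_g(z,2\e^{-\alpha T})$, lift the entry point to a point $\rho\in\bga$ whose $\pi$-image is that point; since $\gamma$ and $\beta$ are both geodesics, $d_S(\rho,\bbe)$ is controlled by the base distance \emph{plus} an angle term, and then \eqref{eq: Bowen growth} propagates this: either $d_S(\rho,\bbe)\gtrsim\e^{-2\kappa T}$ (so $\gamma$ is never within $\e^{-\alpha T}$ of $z$ once $\alpha>2\kappa$ and $T$ large — contradiction) or the angle is also small, meaning $\gamma$ and $\beta$ shadow each other; the length of shadowing is bounded by $\kappa^{-1}\log(\rho_T/\e^{-\alpha T})+O(1)=(\alpha-2\kappa)\kappa^{-1}T+O(1)$ on each side before the tubes must separate. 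Dividing by the arc-length $\e^{-\alpha T}$, the number of ``bad'' indices contributed by a single $\gamma$ is at most $\sim T\e^{\alpha T}\cdot(\text{const}\cdot T)/\e^{\alpha T}\cdot\e^{-\alpha T}$...

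Let me instead phrase the counting cleanly. The total number of (geodesic, arc) bad pairs is at most (number of $\gamma$'s) $\times$ (max bad arcs per $\gamma$) $\leq C\e^{hT}\cdot C'T\e^{(\alpha-2\kappa)T}$, since a $\gamma$ shadowing $\beta$ over a length $\lesssim T$ hits $\lesssim T/\e^{-\alpha T}$ arcs but those arcs also lie in a tube of radius $\rho_T=\epsilon_0\e^{-2\kappa T}$ around $\bbe$, hence near a sub-segment of $\bbe$ of length $\lesssim \kappa^{-1}\log(\rho_T\e^{\alpha T})\cdot$... — the key quantitative point is that $\alpha=2\kappa+h+\eps$ is chosen precisely so that $\e^{hT}\cdot(\text{shadow contribution}) < N = $ number of arcs. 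I would verify: number of bad arcs $\leq C\e^{hT}\cdot(\text{bounded per geodesic in the }z\text{-direction, since shadowing forces }\gamma=\beta\text{ eventually}) $. In fact the cleanest route is: a $\gamma\neq\beta$ that shadows $\beta$ must, by the Closing Lemma reasoning inside Proposition \ref{prop: Phase-space-separation}, have $|\ell(\gamma)-\ell(\beta)|$ and the total shadow length bounded; so $\gamma$ occupies a segment of $\bbe$ of length $O(\log(1/\e^{-\alpha T})\kappa^{-1})=O(\alpha T/\kappa)$, hence $O(\alpha T\e^{\alpha T}/\kappa)$ arcs — no wait, this is $O(T\e^{\alpha T})$ which is all of them. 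The honest statement must be that the shadow length is $O(1)$ uniformly, because once two geodesics separate at Lyapunov rate $\kappa$ from distance $\e^{-\alpha T}$ they reach distance $\rho_T=\epsilon_0\e^{-2\kappa T}$ after time $\lesssim\kappa^{-1}(\alpha-2\kappa)T$, but they could re-approach. This is the \textbf{main obstacle}: controlling the \emph{total} (not just per-excursion) length along $\beta$ over which competitors can hover, and I expect one handles it by the pigeonhole that if $N/(\text{number of }\gamma\text{'s}) \to\infty$ — i.e. $T\e^{\alpha T}/\e^{hT}=T\e^{(\alpha-h)T}\to\infty$, which holds since $\alpha-h=2\kappa+\eps>0$ — then after accounting for a bounded shadow-arc count per $(\gamma,\text{excursion})$ and a bounded number of excursions per $\gamma$ (each excursion uses up length $\gtrsim r_m$ of $\gamma$ by Lemma \ref{lem: Separation Orbits}, so at most $T/r_m$ excursions), the bad arcs number $\leq C T\e^{hT}\ll N$. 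Then pick any good arc center $z$; \eqref{eq: conformal place others} holds, and the last clause (that $B_g(z,\e^{-\alpha T})\cap\beta$ is a single segment) follows from $\e^{-\alpha T}<r_{\inj}(g)/2$ for $T$ large, using \eqref{eq: mini length} exactly as in the proof of Lemma \ref{lem: Separation Orbits}.
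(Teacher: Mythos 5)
Your overall architecture (pigeonhole along $\beta$ against a count of ``bad'' regions contributed by competitors) is the same as the paper's, but the proposal has a genuine gap at exactly the point you flag as the main obstacle, and your resolution of it is not correct. The missing ingredient is a quantitative bound on how long a competitor $\gamma$ can stay within $\e^{-\alpha T}$ of $\beta$ during a single approach. Your two attempts both fail: (a) the dynamical estimate ``they separate at rate $\kappa$, so the shadow lasts $\lesssim\kappa^{-1}(\alpha-2\kappa)T$'' uses \eqref{eq: Bowen growth} as a \emph{lower} bound on divergence, which it is not (it is an upper bound), and even if it were valid it yields an excursion of length $O(T)$, i.e.\ all $N$ arcs, as you yourself note; (b) the fallback assertion that ``the shadow length is $O(1)$ uniformly'' is unproved, and your final count ``bad arcs $\leq CT\e^{hT}$'' tacitly assumes $O(1)$ \emph{arcs} per excursion, i.e.\ excursion length $O(\e^{-\alpha T})$, which is false: a competitor crossing $\beta$ at the minimal allowed angle $\sim\epsilon_{0}\e^{-2\kappa T}$ remains within $\e^{-\alpha T}$ of $\beta$ over a parameter length $\sim\e^{-\alpha T}/\e^{-2\kappa T}=\e^{-(\alpha-2\kappa)T}$, i.e.\ $\e^{2\kappa T}$ of your arcs. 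The correct key step (the paper's Lemma \ref{lem : triangle} and Lemma \ref{lem: almost intersection}) is \emph{geometric}, not dynamical: the phase-space separation of Proposition \ref{prop: Phase-space-separation} forces an angle $\gtrsim\epsilon_{0}\e^{-2\kappa T}$ at each local minimum of $d_{g}(\beta(s),\gamma(t))$, and Toponogov comparison then gives \emph{linear} divergence $d_{g}\gtrsim C_{1}|t|\e^{-2\kappa T}-C_{2}\e^{-\alpha T}$, whence each excursion covers a segment of $\beta$ of length $O(\e^{-(\alpha-2\kappa)T})$. Combined with $O(T^{2})$ excursions per pair (local minima of the distance function are $r_{m}/2$-separated by convexity) and $O(\e^{hT})$ competitors, the bad set has total length $O(T\e^{-\eps T})\to0$, which is what makes $\alpha=2\kappa+h+\eps$ the right threshold. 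Without this linear-divergence input your count does not close.

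A second, smaller gap: the final clause (that $B_{g}(z,\e^{-\alpha T})\cap\beta$ is a \emph{single} segment) does not follow from $\e^{-\alpha T}<r_{\inj}(g)/2$. The injectivity radius only guarantees that one strand of $\beta$ through the ball is embedded; it does not prevent other strands of $\beta$ from entering $B_{g}(z,\e^{-\alpha T})$ without intersecting the first (near-self-intersections, which are the typical situation in dimension $\geq3$). The paper handles this by running the entire almost-intersection machinery a second time on $D_{\beta,\beta}$ and adding the resulting $O(T^{2})$ exponentially short segments to the excluded set before pigeonholing; you need to do the same.
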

The main idea is as follow : since two closed geodesics $\beta,\gamma\in\scC_{g}(T)$
are separated by $\epsilon_{0}\e^{-2\kappa T}$ when lifted in $T^{1}M$,
it means that if $\beta$ and $\gamma$ are very close somewhere in
$M$, the angle ``between'' their tangent vectors must be bounded
below. It follows that the two geodesics cannot stay close to each
other in $M$ for a long time. Proposition \ref{prop: Covering segments}
below is a quantitative version of this observation.

\subsection{Local divergence of orbits separated in phase space}

The next proposition is a simple application of the Toponogov comparison
theorem \cite{Kar89} to study the local divergence of two geodesics
close from each other in $M$, but separated when lifted in $T^{1}M$. 
\begin{lem}
\label{lem : triangle} Assume that $g\in\cM_{k}(\eps_{0})$. Let
$\alpha\geq2\kappa+h$ and $(x,\xi),\,(x',\xi')\in T^{1}M$. Suppose
that 
\begin{equation}
d_{g}(x,x')<\e^{-\alpha T},\qquad d_{S}((x,\pm\xi),(x',\xi'))\geq\epsilon_{0}\e^{-2\kappa T}\,,\label{eq: hyp separation}
\end{equation}
and denote by $I_{x}$ the following geodesic segment : 
\[
I_{x}=\{\pi\circ\Phi^{t}(x,\xi),\,|t|\leq r_{m}/2\}\subset M,\qquad\pi:T^{1}M\to M.
\]
There are $T_{0},C_{1},C_{2}>0$ depending only on $g_{0},\eps_{0}$
such that if $T\geq T_{0}$ and $|t'|\leq r_{m}/2$, 
\begin{equation}
d_{g}(\pi\circ\Phi^{t'}(x',\xi'),I_{x})\geq\max\{C_{1}|t'|\e^{-2\kappa T}-C_{2}\e^{-\alpha T},0\}.\label{eq: distance segment perturb}
\end{equation}
\end{lem}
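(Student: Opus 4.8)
\emph{Proof plan.} The plan is to convert the phase–space separation \eqref{eq: hyp separation} into a lower bound on the \emph{angle} between the two geodesic directions at $x'$, and then to apply the comparison theorem: two geodesics leaving (nearly) the same point with that angle diverge at least linearly.

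First I would extract the angle. Set $a:=d_g(x,x')<\e^{-\alpha T}$, let $c\colon[0,a]\to M$ be the minimizing geodesic from $x$ to $x'$ (unique once $T$ is large enough that $a<r_{\inj}(g)$), and let $\tilde\xi'\in T^1_{x'}M$ be the parallel transport of $\xi$ along $c$. Concatenating the horizontal lift $\tau\mapsto(c(\tau),P_\tau(\pm\xi))$ of $c$, which has Sasaki length $a$, with a minimizing arc in the fibre $T^1_{x'}M$ from $\pm\tilde\xi'$ to $\xi'$, which has Sasaki length $\angle(\pm\tilde\xi',\xi')$, gives
\[
d_S\big((x,\pm\xi),(x',\xi')\big)\le a+\angle(\pm\tilde\xi',\xi')\,.
\]
Together with \eqref{eq: hyp separation} and $a<\e^{-\alpha T}\le\e^{-(2\kappa+h)T}$ this forces $\angle(\pm\tilde\xi',\xi')\ge\tfrac12\epsilon_0\e^{-2\kappa T}$ for \emph{both} signs as soon as $T\ge h^{-1}\log(2/\epsilon_0)$, so the unoriented angle $\theta:=\min\{\angle(\tilde\xi',\xi'),\angle(-\tilde\xi',\xi')\}$ satisfies $\tfrac12\epsilon_0\e^{-2\kappa T}\le\theta\le\pi/2$. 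This step is the only place where $\alpha\ge2\kappa+h$ and the presence of both signs $\pm\xi$ in \eqref{eq: hyp separation} enter, and all constants stay uniform in $g\in\cM_k(\eps_0)$ by continuity of $d_S$ in the metric.

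Next I would move the basepoint from $x$ to $x'$. Since $(x,\xi)$ and $(x',\tilde\xi')$ are joined by a horizontal lift of $c$ of Sasaki length $a$, \eqref{eq: Bowen growth} gives $d_S(\Phi^s(x,\xi),\Phi^s(x',\tilde\xi'))\le\kappa_0\e^{\kappa|s|}a$, hence, projecting by $\pi$,
\[
d_g\big(\pi\Phi^s(x,\xi),\pi\Phi^s(x',\tilde\xi')\big)\le\kappa_0\e^{\kappa r_m/2}\,a\qquad\text{for }|s|\le r_m/2\,.
\]
Thus $I_x$ and $\tilde I_{x'}:=\{\pi\Phi^s(x',\tilde\xi'):|s|\le r_m/2\}$ differ by at most $\kappa_0\e^{\kappa r_m/2}a$ at every matching parameter, so $|d_g(z,I_x)-d_g(z,\tilde I_{x'})|\le\kappa_0\e^{\kappa r_m/2}a$ for every $z\in M$; it therefore suffices to bound $d_g(\pi\Phi^{t'}(x',\xi'),\tilde I_{x'})$ from below and subtract $\kappa_0\e^{\kappa r_m/2}\e^{-\alpha T}$.

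Finally I would estimate the divergence from the common basepoint $x'$. Write $p':=\pi\Phi^{t'}(x',\xi')$; after replacing $(\xi',t')$ by $(-\xi',-t')$ if needed — which alters neither $p'$ nor $\theta$ — assume $t'\ge0$, so $p'=\exp_{x'}(t'\xi')$, and pick $\eta\in\{\tilde\xi',-\tilde\xi'\}$ with $\angle(\xi',\eta)=\theta$, so that $\tilde I_{x'}=\{\exp_{x'}(s\eta):|s|\le r_m/2\}$. Every side of the hinge at $x'$ with endpoints $\exp_{x'}(t'\xi')$ and $\exp_{x'}(s\eta)$ has length $\le r_m<r_{\inj}(g)$, so the Toponogov comparison theorem, applied with the upper bound $K(g)\le0$ (lift the triangle to the Hadamard universal cover), gives that the distance between the endpoints is at least its Euclidean value, namely $\ge\sqrt{t'^2+s^2-2t's\cos\theta}$ for $s\ge0$ and $\ge t'$ for $s\le0$. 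Minimizing over $|s|\le r_m/2$ — the Euclidean minimizer $s=t'\cos\theta$ lies in $[0,r_m/2]$ — yields $d_g(p',\tilde I_{x'})\ge t'\sin\theta\ge\tfrac{2}{\pi}t'\theta\ge\tfrac{\epsilon_0}{\pi}\,t'\,\e^{-2\kappa T}$. Combined with the previous step this is \eqref{eq: distance segment perturb}, with $C_1=\epsilon_0/\pi$, $C_2=\kappa_0\e^{\kappa r_m/2}$, and $T_0$ taken large enough to cover the threshold above and to ensure $\e^{-\alpha T}<r_{\inj}(g)$; the truncation at $0$ is automatic, and for $t'<0$ one runs the same argument with $-\xi'$, getting the bound with $|t'|$. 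The hard part is really the first step: keeping the estimate $d_S\gtrsim\angle$ uniform over $\cM_k(\eps_0)$ and checking that $\alpha\ge2\kappa+h$ is exactly what makes the footpoint error $\e^{-\alpha T}$ negligible against the angular gap $\epsilon_0\e^{-2\kappa T}$ with a $g$-independent $T_0$; the comparison steps are routine.
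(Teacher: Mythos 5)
Your proof is correct and follows essentially the same route as the paper's: parallel transport to reduce to a hinge at a single basepoint, the triangle inequality for the Sasaki metric together with $\alpha\geq2\kappa+h$ to extract the angular lower bound $\gtrsim\epsilon_{0}\e^{-2\kappa T}$, and the Toponogov hinge comparison plus the growth estimate \eqref{eq: Bowen growth} to get linear divergence up to the $C_{2}\e^{-\alpha T}$ error. The only (immaterial) differences are that you base the hinge at $x'$ rather than at $x$, and you are somewhat more explicit about the $\pm\xi$ bookkeeping.
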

\begin{proof}
Write $\rho=(x,\xi),\rho'=(x',\xi')$. We consider first the case
where $x=x'$. Since $g$ has negative curvature, the ball $B_{g}(x,r_{m}/2)$
is convex and the geodesic triangle defined by the points 
\[
\{x,\pi(\rho(t)),\pi(\rho'(t'))\}
\]
 is entirely contained in this ball for $|t|,|t'|\leq r_{m}/2$. Write
$\measuredangle(\xi,\xi')\in[0,\pi]$ for the angle between $\xi$
and $\xi'$ measured with the metric $g$. From \eqref{eq: hyp separation},
we can assume without loss of generality (up to change $\xi\to-\xi$)
that $\measuredangle(\xi,\xi')\in[0,\pi/2]$. From the property of
the Sasaki metric, we have precisely $d_{S}(\rho,\rho')=|\measuredangle(\xi,\xi')|$
since $x=x'$. Hence \eqref{eq: hyp separation} yields to 
\begin{equation}
\epsilon_{0}\e^{-2\kappa T}\leq d_{S}(\rho,\rho')=|\measuredangle(\xi,\xi')|.\label{eq: lower bound angle}
\end{equation}
Let now $\{A,B,C\}$ be an Euclidian triangle with $AB=|t|$, $AC=|t'|$
and 
\[
\measuredangle(AB,AC)=\measuredangle(\xi,\xi').
\]
Since $|t|,|t'|\leq r_{m}/2$, the Toponogov comparison theorem for
negative curvature together with \eqref{eq: lower bound angle} implies
that

\begin{equation}
\frac{1}{2}|t'|\epsilon_{0}\e^{-2\kappa T}\leq|t'\sin\measuredangle(\xi,\xi')|\leq BC\leq d_{g}(\pi(\rho(t)),\pi(\rho'(t'))).\label{eq: Toponogov}
\end{equation}
Note that minimizing on $t\in[-r_{m}/2,r_{m}/2]$ gives \eqref{eq: distance segment perturb}
with $C_{2}=0$. 

We move on now to the case where $x\neq x'$ with $d_{g}(x,x')<\e^{-\alpha T}$.
Consider in $T^{1}M$ the curve $(c(t),v(t))_{0\leq t\leq d_{g}(x,x')}$
where $c(t)$ is a minimizing geodesic connecting $x'$ to $x$ and
$v(t)$ is the parallel transport of $\xi'$ along $c(t)$ for $0\leq t\leq d_{g}(x,x')$.
Define $\zeta=v(d_{g}(x,x'))\in T_{x}^{1}M$ and set 
\[
\ti\rho\defeq(x,\zeta).
\]
The vector $v(t)$ belongs to the horizontal distribution along $c(t)$,
and $\pi(\ti\rho)=\pi(\rho)=x$ : by the definition of the Sasaki
metric, this gives 
\[
d_{S}(\rho',\ti\rho)=d_{g}(x,x'),\qquad d_{S}(\rho,\ti\rho)=|\measuredangle(\xi,\zeta)|.
\]
From \eqref{eq: hyp separation}, the triangle inequality and the
fact that $\alpha\geq2\kappa+h$, we get 
\[
\e^{-2\kappa T}(\epsilon_{0}-\e^{-hT})\leq|\measuredangle(\xi,\zeta)|.
\]
Hence for $T$ large enough depending only on $g_{0},\eps_{0}$ via
$\epsilon_{0},h$, we have $|\measuredangle(\xi,\zeta)|\geq\epsilon_{0}\e^{-2\kappa T}/2$.
From this observation, we proceed as for \eqref{eq: Toponogov} :
assuming without loss of generality that $\measuredangle(\xi,\zeta)\in[0,\pi/2]$
we have 
\begin{eqnarray*}
\frac{1}{4}|t'|\epsilon_{0}\e^{-2\kappa T} & \leq & d_{g}(\pi(\rho(t)),\pi(\ti\rho(t')))\\
 & \leq & d_{g}(\pi(\rho(t)),\pi(\rho'(t')))+d_{g}(\pi(\rho'(t')),\pi(\ti\rho(t')))\\
 & \leq & d_{g}(\pi(\rho(t)),\pi(\rho'(t')))+\kappa_{0}\e^{\kappa|t'|}\e^{-\alpha T}.
\end{eqnarray*}
But $|t'|\leq r_{m}/2$, so 
\[
\frac{\epsilon_{0}}{4}|t'|\e^{-2\kappa T}-\kappa_{0}\e^{\kappa r_{m}/2}\e^{-\alpha T}\leq d(\pi(\rho(t)),\pi(\rho'(t'))),
\]
and we conclude the proof by minimizing on $t\in[-r_{m}/2,r_{m}/2]$
and setting $C_{1}=\epsilon_{0}/4$, $C_{2}=\kappa_{0}\e^{\kappa r_{m}/2}$. 
\end{proof}
Given two geodesics $\beta,\gamma\in\scC_{g}(T)$, the preceding proposition
will enable us to estimate the total length of pieces of these geodesics
which are close to each other.

\subsection{Intersections and almost-intersections }

Roughly speaking, finding $z$ in Proposition \ref{thm: conformal place}
can be done if we are able to find a sufficiently large piece of $\beta\in\scC_{g}(T)$
that ``avoids'' both all other geodesics in $\scC_{g}(T)$, and
all the rest of $\beta$ . This suggests to study the situations where
two given closed geodesics intersect, or more generally come close
to each other.

Let $\beta,\gamma\in\scC_{g}(T)$ be two closed geodesics. It is essentially
well known that their intersection number $i(\beta,\gamma)\in\N$
grows quadratically with $T$ : 
\begin{equation}
i(\beta,\gamma)=\cO_{r_{\inj(g)}}\left(T\right)^{2}.\label{eq: intersection numbre}
\end{equation}
This is a consequence of the fact that if $x$ and $x'$ are two intersection
points such that $x\xrightarrow{\gamma}x'$ is a segment, a loop 
\[
x\xrightarrow{\gamma}x'\xrightarrow{\beta}x
\]
 satisfies $\ell(x'\xrightarrow{\beta}x)\geq r_{\inj}(g)$. The estimate
\eqref{eq: intersection numbre} follows by cutting $\gamma$ into
pieces of length $<r_{\inj}(g)$ and applying the above property repeatedly.
Intersections of two closed geodesics are special situations where
small tubular neighborhoods of the geodesics intersect, and the above
remark shows that the topology of the manifold excludes that too many
intersections points can be themselves close to each other. 

In dimension $3$ or more, intersections and self-intersections are
marginal, so we would like to generalize \eqref{eq: intersection numbre}
to ``almost-intersections'', namely regions in $M$ where two geodesics
are close to each other but without necessarily intersecting. Of course
points of almost-intersection will not be countable, so we will have
to consider small segments. 

Before continuing further, for $\epsilon>0$ let us denote by 
\[
\theta_{\gamma}^{\epsilon}\defeq\{x\in M\ |\ d_{g}(x,\gamma)<\epsilon\}
\]
an open tubular neighborhood of $\gamma\in\scC_{g}$ of size $\epsilon$.
 The next proposition allows to control the size of $\beta\cap\theta_{\gamma}^{\epsilon}$
when $\beta,\gamma\in\scC_{g}(T)$ : 
\begin{prop}
\label{prop: Covering segments}Let $\beta,\gamma\in\scC_{g}(T)$.
Fix $\alpha\geq2\kappa+h$ and set 
\[
\epsilon\defeq\e^{-\alpha T}.
\]
There are $T_{0},C_{3}>0$ depending only on $g_{0},\eps_{0}$ such
that for $T>T_{0}$, if the set $\beta\cap\theta_{\gamma}^{\epsilon}$
is not empty, it can be covered by a finite number of geodesic segments
$\{J_{1},\dots,J_{n(\beta,\gamma)}\}\subset\beta$ with the following
properties :

(i) $0\leq n(\beta,\gamma)\leq4\left(\frac{T}{r_{m}}\right)^{2},$

(ii) $|J_{i}|\leq C_{3}\e^{-(\alpha-2\kappa)T}$ for all $1\leq i\leq n(\beta,\gamma)$. 
\end{prop}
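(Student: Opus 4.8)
The plan is to analyze the closed set $\beta\cap\theta_{\gamma}^{\epsilon}$ by first decomposing it into maximal connected arcs of $\beta$, and then showing that each such arc is short (this will give property (ii)) and that there cannot be too many of them (this will give property (i)). First I would write $\beta\cap\theta_{\gamma}^{\epsilon}$ as a disjoint union of closed arcs $K_1,\dots,K_m$ of $\beta$, where each $K_i$ is a maximal subsegment of $\beta$ entirely contained in $\overline{\theta_{\gamma}^{\epsilon}}$. Parameterize $\beta$ by arc length; for a point $x\in K_i$ with lift $(x,\xi)\in\bbe$, pick a nearest point $x'\in\gamma$, so $d_g(x,x')<\epsilon=\e^{-\alpha T}$, and let $(x',\xi')$ be a lift of $\gamma$. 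Since $\beta\neq\gamma$ and both lie in $\scC_g(T)$, Proposition~\ref{prop: Phase-space-separation} gives $d_S((x,\pm\xi),(x',\xi'))\geq\epsilon_0\e^{-2\kappa T}$, which is exactly the separation hypothesis \eqref{eq: hyp separation} of Lemma~\ref{lem : triangle}. Applying that lemma, the geodesic through $(x',\xi')$ diverges from the segment $I_x\subset\beta$ at a linear rate: $d_g(\pi\circ\Phi^{t'}(x',\xi'),I_x)\geq C_1|t'|\e^{-2\kappa T}-C_2\e^{-\alpha T}$.

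The key step is to turn this divergence estimate into an upper bound on $|K_i|$. If $|K_i|$ were larger than some multiple of $\e^{-(\alpha-2\kappa)T}$, I would take $x$ to be (near) the midpoint of $K_i$, so that a sub-arc of $\beta$ of length comparable to $|K_i|$ on either side of $x$ stays inside $\overline{\theta_{\gamma}^{\epsilon}}$, hence stays within $\e^{-\alpha T}$ of $\gamma$. But Lemma~\ref{lem : triangle}, applied along the matching piece of $\gamma$ issued from $x'$, forces $\gamma$ to be at distance $\geq C_1|t'|\e^{-2\kappa T}-C_2\e^{-\alpha T}$ from $\beta$ near $x$; once $|t'|\gtrsim \e^{-(\alpha-2\kappa)T}$ this lower bound exceeds $\e^{-\alpha T}$, contradicting the fact that those points of $\beta$ are within $\e^{-\alpha T}$ of $\gamma$. (One must keep $|t'|\leq r_m/2$ so the lemma applies; since $\alpha\geq 2\kappa+h$ the threshold $\e^{-(\alpha-2\kappa)T}$ is $\leq\e^{-hT}\ll r_m/2$ for $T$ large, so this is harmless, and it also forces us to subdivide any $K_i$ longer than $r_m$ into pieces of length $\leq r_m$ first, absorbing a harmless constant.) This yields $|K_i|\leq C_3\e^{-(\alpha-2\kappa)T}$, i.e.\ property (ii), with $C_3$ depending only on $C_1,C_2$ and $r_m$, hence only on $g_0,\eps_0$; taking $\{J_i\}=\{K_i\}$ (further subdivided to have length $\leq r_m$) gives the $J_i$ in the statement.

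For property (i), the bound on $n(\beta,\gamma)$, I would argue as in the discussion of the intersection number \eqref{eq: intersection numbre}: between two consecutive arcs $K_i,K_{i+1}$ along $\beta$, the piece of $\beta$ joining them exits $\theta_{\gamma}^{\epsilon}$, and I claim the $\beta$-length between the start of $K_i$ and the start of $K_{i+1}$ is at least $r_m$ (up to a constant). Indeed, consider endpoints $p\in K_i$, $q\in K_{i+1}$ with nearest points $p',q'\in\gamma$; if the $\beta$-arc from $p$ to $q$ were shorter than, say, $r_m/2$, then the loop obtained by following $\beta$ from $p$ to $q$, then a short geodesic from $q$ to $q'$, then $\gamma$ from $q'$ to $p'$, then a short geodesic from $p'$ to $p$, would be a contractible loop of total length $<r_{\inj}(g)$ relative to the geodesic $\gamma$-arc $p'q'$, which by the injectivity radius bound \eqref{eq: mini length} and $r_m<r_{\inj}(g)$ forces $p$ and $q$ to be joined through $\gamma$ as well — but then the $\gamma$-arc is short and lies near $\beta$, and the linear-divergence Lemma~\ref{lem : triangle} again forbids $\beta$ and $\gamma$ from staying $\e^{-\alpha T}$-close over such a length unless the length is $\lesssim \e^{-(\alpha-2\kappa)T}$, which does not cover the gap between $K_i$ and $K_{i+1}$. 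So consecutive arcs are $\gtrsim r_m$ apart along $\beta$ of total length $\ell(\beta)\leq T$, giving $m\leq 2T/r_m$; a symmetric accounting on the $\gamma$ side (each $K_i$ shadows a distinct piece of $\gamma$, and these pieces are themselves $\gtrsim r_m$ apart, by the same loop argument) upgrades this to the quadratic bound $n(\beta,\gamma)\leq 4(T/r_m)^2$ in (i). The main obstacle is the second, more topological step: carefully setting up the loop argument so that the injectivity-radius obstruction \eqref{eq: mini length} genuinely applies (in particular handling the reparameterization of $\gamma$-arcs, exactly as in the proof of Lemma~\ref{lem: Separation Orbits}) and extracting the correct quadratic count rather than a linear one, while keeping all constants uniform over $g\in\cM_k(\eps_0)$.
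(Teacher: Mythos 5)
Your mechanism for the length bound is the right one (Proposition \ref{prop: Phase-space-separation} feeding the separation hypothesis \eqref{eq: hyp separation} into Lemma \ref{lem : triangle}), and this matches the paper. But the decomposition you build the proof on --- maximal connected arcs $K_1,\dots,K_m$ of $\beta\cap\theta_{\gamma}^{\epsilon}$, viewed as subsets of the one-dimensional object $\beta$ --- loses exactly the information that makes both (i) and (ii) work, namely \emph{which strand of $\gamma$} a given piece of $\beta$ is shadowing. For (ii): Lemma \ref{lem : triangle} applied at $x\in K_i$ with nearest point $x'\in\gamma$ only shows that $\beta(t)$ leaves the $\epsilon$-neighborhood of the particular segment $I_{x'}\subset\gamma$ once $|t|\gtrsim\e^{-(\alpha-2\kappa)T}$; it does not prevent $\beta(t)$ from immediately entering the $\epsilon$-neighborhood of a \emph{different} strand of $\gamma$ (two strands of $\gamma$ can pass within $2\epsilon$ of each other). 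So a single maximal arc $K_i$ can be a concatenation of several shadowing episodes and need not satisfy $|K_i|\le C_3\e^{-(\alpha-2\kappa)T}$; taking $\{J_i\}=\{K_i\}$ does not give (ii). For (i): the claim that consecutive arcs $K_i,K_{i+1}$ are $\gtrsim r_m$ apart along $\beta$ fails for the same reason --- they may shadow far-apart pieces of $\gamma$, in which case the $\gamma$-portion of your loop from $q'$ back to $p'$ is not short and the injectivity-radius obstruction never engages. (Note also that if the linear bound $m\le 2T/r_m$ were true you would not need to ``upgrade'' it to the \emph{weaker} quadratic bound; that hedge is where the real difficulty hides.)

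The paper's device is to work in the product parameter space: set $D_{\beta,\gamma}(s,t)=d_g(\beta(s),\gamma(t))$ on $[0,\ell(\beta)]\times[0,\ell(\gamma)]$ and decompose $D_{\beta,\gamma}^{-1}(]0,\epsilon[)$ into connected components \emph{there}. Convexity of the distance function in negative curvature shows that the local minima of $D_{\beta,\gamma}$ are $(r_m/2)$-separated in the product space, giving the quadratic count $4(T/r_m)^2$ directly, and that each component contains exactly one of them. To each such local minimum (an ``almost-intersection'' $(x_i,y_i)$) one attaches, via Lemma \ref{lem : triangle}, a segment $J(x_i,y_i)\subset\beta$ of length $\le C_3\e^{-(\alpha-2\kappa)T}$ outside of which $I_{x_i}$ stays $\epsilon$-far from $I_{y_i}$; a connectedness argument (following a path inside the component from an arbitrary close pair down to its local minimum) then shows these segments cover $\beta\cap\theta_{\gamma}^{\epsilon}$. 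Your plan needs this two-variable bookkeeping --- indexing the covering segments by pairs of strands rather than by arcs of $\beta$ --- to close the gap; as written, neither bound is justified.
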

Before giving the proof of this proposition, we need some preliminary
results. We fix two parameterizations of $\beta,\gamma$ by arc-length
and define the continuous maps 
\[
D_{\beta,\gamma}:\begin{cases}
[0,\ell(\beta)]\times[0,\ell(\gamma)]\to\R_{+}\\
(s,t)\mapsto d_{g}(\beta(s),\gamma(t))\,.
\end{cases}G_{\beta,\gamma}:\begin{cases}
[0,\ell(\beta)]\times[0,\ell(\gamma)]\to M\times M\\
(s,t)\mapsto(\beta(s),\gamma(t))\,.
\end{cases}
\]
Consider an open connected component $U\subset\R^{2}$ of $D_{\beta,\gamma}^{-1}(]0,\epsilon[)$.
Such a set $U$ exists since we assumed $\beta\cap\theta_{\gamma}^{\epsilon}\neq\emptyset$,
furthermore, there is $(s_{u},t_{u})\in U$ which is a local minimum
of $D_{\beta,\gamma}|_{U}$ . Assume that $T$ is large enough so
that $\e^{-\alpha T}\leq r_{m}/2$. By the convexity of the distance
function in negative curvature, for $s,t$ such that $0<|t|\leq r_{m}/2$
and $0<|s|\leq r_{m}/2$, we have 
\[
d_{g}(\beta(s_{u}+s),\gamma(t_{u}+t))>d_{g}(\beta(s_{u}),\gamma(t_{u})).
\]
It follows that the local minima of $D_{\beta,\gamma}$ are isolated,
and the total number of local minima of this map is at most
\[
\frac{\ell(\beta)\times\ell(\gamma)}{(r_{m}/2)^{2}}\leq4\left(\frac{T}{r_{m}}\right)^{2}.
\]
If $(s,t)\in[0,\ell(\beta)]\times[0,\ell(\gamma)]$ is a local minimum
of $D_{\beta,\gamma}$ and $(x,y)=G_{\beta,\gamma}(s,t)$, we say
that $(x,y)$ is an \emph{almost-intersection} of $\beta$ and $\gamma$. 

Consider now an almost intersection $(x,y)\in\beta\times\gamma$,
and let us shift the origin of the parameterizations of $\beta$ and
$\gamma$ so that we can write 
\[
(x(0),\dot{x}(0))=(x,\xi),\qquad(y(0),\dot{y}(0))=(y,\eta).
\]
Define the open segment $I_{x}=\{\pi\circ\Phi^{t}(x,\xi)\::|t|\leq r_{m}/2\}\subset\beta$
and its open $\epsilon$-neighborhood in $M$ by 
\[
\theta_{\beta}^{\epsilon}(I_{x})=\{z\in M:\ d_{g}(z,I_{x})<\epsilon\}.
\]
Define similarly $I_{y}$ and $\theta_{\gamma}^{\epsilon}(I_{y})$. 
\begin{lem}
\label{lem: almost intersection} Let $(x,y)$ be an almost intersection
as above, and fix $\alpha\geq2\kappa+h$. For $T>0$ sufficiently
large depending only on $g_{0},\eps_{0}$, there are open segments
$J(x,y)\subset\beta$ containing $x$ and $J(y,x)\subset\gamma$ containing
$y$ such that : 

(i) $I_{x}\cap\theta_{\gamma}^{\epsilon}(I_{y})\subset J(x,y)$ and
$I_{y}\cap\theta_{\beta}^{\epsilon}(I_{x})\subset J(y,x)$, 

(ii) $\max\{|J(x,y)|,|J(y,x)|\}\leq C_{3}\e^{-(\alpha-2\kappa)T}$,
where $C_{3}$ depends only on $g_{0},\eps_{0}$. \\
In other words, 
\[
\forall z\in I_{x}\setminus J(x,y),\quad d_{g}(z,I_{y})\geq\epsilon,
\]
 and the symmetric property is true by exchanging the roles of $x$
and $y$.\end{lem}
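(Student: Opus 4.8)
The plan is to read both conclusions off Lemma~\ref{lem : triangle}, applied to the two unit tangent vectors $\rho=(x,\xi)=(x(0),\dot x(0))$ carried by $\beta$ and $\rho'=(y,\eta)=(y(0),\dot y(0))$ carried by $\gamma$ at the almost-intersection $(x,y)$. First I would check that $\rho,\rho'$ satisfy the hypotheses \eqref{eq: hyp separation}. Since $(x,y)=G_{\beta,\gamma}(s,t)$ lies in a connected component of $D_{\beta,\gamma}^{-1}(]0,\epsilon[)$, we have $d_g(x,y)<\epsilon=\e^{-\alpha T}$, which is the first condition. For the second, $\beta$ and $\gamma$ are distinct elements of $\scC_g(T)$, so Proposition~\ref{prop: Phase-space-separation} gives $\Theta_{\bbe}^{\epsilon_0\e^{-2\kappa T}}\cap\Theta_{\bga}^{\epsilon_0\e^{-2\kappa T}}=\emptyset$ for \emph{any} lifts; taking the lift of $\beta$ through $(x,\pm\xi)$ and the lift of $\gamma$ through $(y,\pm\eta)$ and evaluating one tube at a point of the corresponding central orbit, we read off $d_S((x,\pm\xi),(y,\eta))\ge\epsilon_0\e^{-2\kappa T}$, which is exactly the separation in \eqref{eq: hyp separation}. (If one also wants to allow $\beta=\gamma$, the same lower bound for two vectors on a single lift follows from the uniqueness of nearby periodic points recalled in the proof of Proposition~\ref{prop: Phase-space-separation}.) Since $\alpha\ge 2\kappa+h$, Lemma~\ref{lem : triangle} applies.

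Applying Lemma~\ref{lem : triangle} with $\rho=(x,\xi)$ playing the role of the first vector yields, for $T\ge T_0(g_0,\eps_0)$ and $|t'|\le r_m/2$,
\[
d_g\big(\pi\circ\Phi^{t'}(y,\eta),\,I_x\big)\ \ge\ C_1|t'|\e^{-2\kappa T}-C_2\e^{-\alpha T},
\]
and interchanging the roles of $\rho$ and $\rho'$ gives the same bound with $d_g(\pi\circ\Phi^{s}(x,\xi),I_y)$ for $|s|\le r_m/2$. The right-hand side is $\ge\epsilon=\e^{-\alpha T}$ as soon as $|t'|\ge\frac{C_2+1}{C_1}\e^{-(\alpha-2\kappa)T}$, and since $\alpha-2\kappa\ge h>0$ this threshold is $<r_m/2$ once $T$ is large enough depending only on $g_0,\eps_0$ (use $\e^{-(\alpha-2\kappa)T}\le\e^{-hT}$). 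I would therefore set
\[
J(y,x)=\Big\{\pi\circ\Phi^{t'}(y,\eta):\ |t'|<\tfrac{C_2+1}{C_1}\e^{-(\alpha-2\kappa)T}\Big\}\subset I_y\subset\gamma,
\]
and define $J(x,y)\subset\beta$ symmetrically. Each is an open geodesic segment of the corresponding geodesic, contains the almost-intersection point (the value $t'=0$, resp. $s=0$), and has length $\le C_3\e^{-(\alpha-2\kappa)T}$ with $C_3=2(C_2+1)/C_1$, which depends only on $g_0,\eps_0$; this is (ii). For (i), if $z=\pi\circ\Phi^{t'}(y,\eta)\in I_y\setminus J(y,x)$ then $|t'|\ge\frac{C_2+1}{C_1}\e^{-(\alpha-2\kappa)T}$, so the displayed inequality gives $d_g(z,I_x)\ge\e^{-\alpha T}=\epsilon$, i.e. $z\notin\theta_\beta^\epsilon(I_x)$; hence $I_y\cap\theta_\beta^\epsilon(I_x)\subset J(y,x)$, and symmetrically $I_x\cap\theta_\gamma^\epsilon(I_y)\subset J(x,y)$. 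The ``in other words'' reformulation is just the contrapositive of what was shown.

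I do not expect a genuine obstacle: the two substantial inputs, the Toponogov-type divergence estimate (Lemma~\ref{lem : triangle}) and the phase-space separation (Proposition~\ref{prop: Phase-space-separation}), are already available, and the rest is solving one linear inequality. The only points needing care are bookkeeping: invoking Proposition~\ref{prop: Phase-space-separation} for all choices of lifts of $\beta$ and $\gamma$, so as to control the unoriented pairs $(x,\pm\xi)$ and $(y,\pm\eta)$ that appear in Lemma~\ref{lem : triangle}; and arranging that $T_0$ and $C_3$ depend on $g_0,\eps_0$ only, which works because $\alpha-2\kappa\ge h$ lets one replace every occurrence of $\e^{-(\alpha-2\kappa)T}$ by $\e^{-hT}$ in the smallness conditions.
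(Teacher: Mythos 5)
Your proposal is correct and follows essentially the same route as the paper: both invoke Proposition \ref{prop: Phase-space-separation} to verify the separation hypothesis of Lemma \ref{lem : triangle}, then solve the linear inequality $C_1|t|\e^{-2\kappa T}-C_2\e^{-\alpha T}\geq\e^{-\alpha T}$ to extract the intervals $J(x,y),J(y,x)$ with the same constant $C_3=2(1+C_2)/C_1$. The only cosmetic difference is that the paper takes $J$ to be a maximal interval containing $\{t: x(t)\in\theta_\gamma^{\epsilon}(I_y)\}$ while you define it explicitly by the threshold on $|t'|$; both yield the same bounds.
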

\begin{proof}
Since Proposition \ref{prop: Phase-space-separation} ensures that
$\Theta_{\bga}^{\epsilon_{0}\e^{-2\kappa T}}\cap\Theta_{\bbe}^{\epsilon_{0}\e^{-2\kappa T}}=\emptyset$
for any lifts $\bga,\bbe$ in $T^{1}M$, this implies 
\[
d_{S}((x,\pm\xi),(y,\eta))\geq\epsilon_{0}\e^{-2\kappa T}.
\]
On the other hand, $d_{g}(x,y)<\epsilon$, so we are in position to
apply Proposition \ref{lem : triangle}, which gives readily 
\begin{equation}
d_{g}(x(t),I_{y})\geq C_{1}|t|\e^{-2\kappa T}-C_{2}\e^{-\alpha T},\quad t\in[-r_{m}/2,r_{m}/2],\label{eq: distance J}
\end{equation}
and the same equation holds true by exchanging the roles of $x$ and
$y$. Define $C_{3}=2C_{1}^{-1}(1+C_{2})$ and take $T\geq T_{0}$
where $C_{3}\e^{-hT_{0}}<r_{m}$. In this case, we have 
\[
r_{m}/2\geq|t|\geq C_{1}^{-1}(1+C_{2})\e^{(2\kappa-\alpha)T}=\frac{C_{3}}{2}\e^{-(\alpha-2\kappa)T}\Rightarrow d_{g}(x(t),I_{y})\geq\epsilon=\e^{-\alpha T},
\]
Therefore, there is a (maximal) non-empty open interval $]t_{x}^{-},t_{x}^{+}[\subset[-r_{m}/2,r_{m}/2]$
with $|t_{x}^{+}-t_{x}^{-}|\leq C_{3}\e^{-(\alpha-2\kappa)T}$ such
that 
\[
x(t)\in\theta_{\gamma}^{\epsilon}(I_{y})\ \Rightarrow\ t\in]t_{x}^{-},t_{x}^{+}[.
\]

We can define an interval $]t_{y}^{-},t_{y}^{+}[$ in the same way
by permuting the roles of $x$ and $y$. The open segments with the
desired properties are precisely 
\[
J(x,y)\defeq\{x(t),\ t_{x}^{-}<t<t_{x}^{+}\},\qquad J(y,x)\defeq\{y(t),\ t_{y}^{-}<t<t_{y}^{+}\}.
\]

\end{proof}
\emph{Proof of Proposition }\ref{prop: Covering segments}. Call 
\[
(s_{1},t_{1}),\dots,(s_{n(\beta,\gamma)},t_{n(\beta,\gamma)}),\quad n(\beta,\gamma)\leq4\left(\frac{T}{r_{m}}\right)^{2}
\]
the local minima of the function $D_{\beta,\gamma}$, and write $(x_{i},y_{i})=G_{\beta,\gamma}(s_{i},t_{i})$
the almost-intersections identified with these local minima via the
parameterizations of the closed geodesics. We just need to check that
if $(x,y)\in\beta\times\gamma$ is such that 
\[
d_{g}(x,y)<\epsilon,
\]
then there is $i\in[1,n(\beta,\gamma)]$ such that $x\in J(x_{i},y_{i})$
and $y\in J(y_{i},x_{i})$ where the intervals are given by the preceding
lemma. This is clear if $(x,y)$ is an almost-intersection. Otherwise
let $(s_{x},t_{y})$ be such that $G_{\beta,\gamma}(s_{x},t_{x})=(x,y)$,
and $U\subset[0,\ell(\beta)]\times[0,\ell(\gamma)]$ be the connected
component of $D_{\beta,\gamma}^{-1}(]0,\epsilon[)$ containing $(s_{x},t_{y})$.
$U$ contains a local minimum $(\ti s,\ti t)$ of $D_{\beta,\gamma}$,
and since it is arc-connected (as it is locally), there is a continuous
path 
\[
f:[0,1]\to[0,\ell(\beta)]\times[0,\ell(\gamma)]
\]
 joining $(\ti s,\ti t)$ to $(s_{x},t_{y})$ which is fully contained
in $U$, namely : 
\[
G_{\beta,\gamma}\circ f(0)=(\ti x,\ti y),\qquad G_{\beta,\gamma}\circ f(1)=(x,y),\qquad(\ti x,\ti y)=G_{\beta,\gamma}(\ti s,\ti t),
\]
and 
\begin{equation}
\forall t\in[0,1],\quad d_{g}(G_{\beta,\gamma}\circ f(t))<\epsilon.\label{eq: epsilon path}
\end{equation}
Let us define 
\[
\ti T\defeq[\ti s-\frac{r_{m}}{2},\ti s+\frac{r_{m}}{2}]\times[\ti t-\frac{r_{m}}{2},\ti t+\frac{r_{m}}{2}]\subset[0,\ell(\beta)]\times[0,\ell(\gamma)].
\]
Lemma \ref{lem: almost intersection} shows precisely that 
\[
(s,t)\in\ti T\setminus G_{\beta,\gamma}^{-1}\left(J(\ti x,\ti y)\times J(\ti y,\ti x)\right)\ \Rightarrow\ D_{\beta,\gamma}(s,t)\geq\epsilon.
\]
In view of \eqref{eq: epsilon path}, the continuity of $f$ and $G_{\beta,\gamma}\circ f(0)=(\ti x,\ti y)$,
this implies that 
\[
G_{\beta,\gamma}\circ f([0,1])\subset J(\ti x,\ti y)\times J(\ti y,\ti x),
\]
and this shows that $x\in J(\ti x,\ti y)$ and $y\in J(\ti y,\ti x)$.
In particular, in each connected component of $D_{\beta,\gamma}^{-1}(]0,\epsilon[)$
there is a unique local minimum of $D_{\beta,\gamma}$. 

We have just shown that if $x\in\beta$ is such that $d_{g}(x,\gamma)<\epsilon$,
there is $i\in[1,n(\beta,\gamma)]$ such that $x\in J(x_{i},y_{i})$.
Therefore, $\beta\cap\theta_{\gamma}^{\epsilon}$ is covered by
\[
U_{\beta}(\gamma)\defeq\bigcup_{i=1}^{n(\beta,\gamma)}J(x_{i},y_{i})\subset\beta.
\]
The proof of Proposition \ref{prop: Covering segments} is completed
since there are at most $4\left(T/r_{m}\right)^{2}$ terms in the
above equation, and for all $i$, $|J(x_{i},y_{i})|\leq C_{3}\e^{-(\alpha-2\kappa)T}$
from Lemma \ref{lem: almost intersection}.

\subsection{Proof of Proposition \ref{thm: conformal place} : case of distinct
geodesics. }

In this section, we establish Equation \eqref{eq: conformal place others}.
Let $\beta\in\scC_{g}(T)$ where $g\in\cM_{k}(\eps_{0})$. For $\eps>0$,
let us choose $\alpha>0$ such that 
\[
\alpha\geq2\kappa+h+\eps.
\]
We then take $T_{0}>0$ large enough so that Propositions \ref{prop: Phase-space-separation}
and \ref{prop: Covering segments} hold true for $T\geq T_{0}$. From
Proposition \ref{prop: Covering segments}, we have 
\begin{equation}
\ell(U_{\beta}(\gamma))\leq4\left(\frac{T}{r_{m}}\right)^{2}\times C_{3}\e^{-(\alpha-2\kappa)T}.\label{eq: size U b g}
\end{equation}
We now consider all closed geodesics $\gamma\neq\beta$ with $\beta\in\scC_{g}(T)$
fixed and $\gamma\in\scC_{g}(T)$. To get a uniform bound on the counting
function for closed geodesics for the metric $g$, note first that
\eqref{eq: Margulis} implies that there is $C_{0}>0$ depending only
on $g_{0}$ such that $\sharp\scC_{g_{0}}(T)\leq C_{0}T^{-1}\e^{h_{\top}T}$
for, say, $T>r_{m}$. For $\gamma_{0}\in\scC_{g_{0}}$, let $\gamma=f_{g_{0}\to g}(\gamma_{0})$.
In view of \eqref{eq: length perturb}, 
\[
\ell_{g_{0}}(\gamma_{0})>T\sqrt{1+\eps_{0}}\Rightarrow\ell_{g}(\gamma)>T,
\]
so $f_{g_{0}\to g}^{-1}(\scC_{g}(T))\subset\scC_{g_{0}}(T\sqrt{1+\eps_{0}})$
and therefore, 
\begin{equation}
\sharp\scC_{g}(T)\leq\sharp\scC_{g_{0}}(T\sqrt{1+\eps_{0}})\leq C_{0}\frac{\e^{hT}}{T}.\label{eq: Margu grossier}
\end{equation}
Setting 
\[
\bU_{\beta}\defeq\bigcup_{\gamma\in\scC_{g}(T)\setminus\beta}U_{\beta}(\gamma),
\]
we obtain using \eqref{eq: size U b g} that 
\begin{equation}
\ell(\bU_{\beta})\leq\frac{4C_{3}C_{0}}{r_{m}^{2}}T\e^{-(\alpha-2\kappa-h)T}=\cO_{g_{0},\eps_{0}}(T\e^{-\eps T})\xrightarrow{T\to\infty}0.\label{eq: total length intersection}
\end{equation}
Define 
\[
\bV_{\beta}=\{x\in\beta:\ \forall\gamma\in\scC_{g}(T)\setminus\beta,\ d_{g}(x,\gamma)\geq\epsilon\}.
\]

By construction, $\beta\setminus\bU_{\beta}\subset\bV_{\beta}$ and
$\bU_{\beta}$ is a finite union of $\cO(T\e^{hT})$ open segments.
Therefore, we see by a box principle along $\beta$ using \eqref{eq: total length intersection}
that for $T$ large enough depending only on $g_{0},\eps_{0}$ and
$\eps$, the set $\bV_{\beta}$ contains at least one segment $I$
of size 
\[
|I|\geq T^{-2}\e^{-hT}\geq2\e^{-\alpha T}.
\]
We assumed $2\e^{-\alpha T}\leq r_{m}$, so if we choose $z\in\beta$
to be the middle of $I$, then 
\begin{equation}
\forall\gamma\in\scC_{g}(T)\setminus\beta,\qquad B_{g}(z,\epsilon)\cap\gamma=\emptyset,\label{eq: avoiding point}
\end{equation}
and this shows \eqref{eq: conformal place others}. It remains to
establish that $z$ can be chosen such that the ball $B_{g}(z,\epsilon)$
also avoids $\beta$ except on a single geodesic segment of $\beta$
containing $z$.

\subsection{Almost-intersections of a single closed geodesic}

To conclude the proof of Proposition \ref{thm: conformal place},
we now indicate how the results of the preceding sections allow us
to control almost-intersections of a closed geodesic with itself. 

As above, we take $\alpha\geq h+2\kappa+\eps$ for some $\eps>0$.
Fix some parameterization of $\beta\in\scC_{g}(T)$ by arc-length.
Let $(x,y)\in\beta\times\beta$, and call $t_{x},t_{y}$ times such
that $x=\beta(t_{x})$, $y=\beta(t_{y})$. We will say that the couple
$(x,y)$ is an almost-intersection of $\beta$ with itself if $d_{g}(x,y)<\epsilon$,
$t_{y}\neq t_{x}$ and $(t_{x},t_{y})$ is a local minimum of $D_{\beta,\beta}:(t,t')\mapsto d_{g}(\beta(t),\beta(t'))$.
In particular, either $(x,y)$ is a self-intersection of $\beta$,
or the segment joining $x$ to $y$ is not included in $\beta$. Using
as before convexity of the distance function in negative curvature,
we get that there are at most $\cO_{r_{m}}(T^{2})$ such couples of
almost-intersections, and arguments identical to those developed in
the proof of Proposition \ref{prop: Phase-space-separation} show
that if $(x,y)$ is an almost-intersection, then 
\[
d_{S}((\beta(t_{y}),\dot{\beta}(t_{y})),(\beta(t_{x}),\dot{\beta}(t_{x})))\geq\epsilon_{0}\e^{-2\kappa T}.
\]
For $z=\beta(t_{z})$, define as before 
\[
I_{z}=\{\Phi^{t}(\beta(t_{z}),\dot{\beta}(t_{z})),\ |t|\leq r_{m}/2\}.
\]
In particular, as in Lemma \ref{lem: almost intersection}, to $(x,y)$
we can associate segments $J(x,y)\subset I_{x}$ and $J(y,x)\subset I_{y}$
of $\beta$ centered at $x$ and $y$ respectively, of size $\cO_{g_{0},\eps_{0}}(\e^{-(\alpha-2\kappa)T})$
such that $\forall z\in I_{x}\setminus J(x,y)$, $d_{g}(z,I_{y})\geq\epsilon$
and symmetrically when interchanging the roles of $y$ and $x$. Exactly
as in Proposition \ref{prop: Covering segments}, we can then show
that the set 
\[
\boldsymbol{I}_{\beta}=\{z\in\beta:\ (B_{g}(z,\epsilon)\setminus I_{z})\cap\beta\neq\emptyset\}
\]
 can be covered by $\cO_{g_{0},\eps_{0}}(T^{2})$ segments of size
$\cO_{g_{0},\eps_{0}}(\e^{-(\alpha-2\kappa)T})$, for $T\geq T_{0}(g_{0},\eps_{0})$.
These segments can be added up to $\bU_{\beta}$ : exactly as previously,
we end up by a box principle with the fact that for $T$ large enough
depending now on $g_{0},\eps_{0}$ and $\eps$, there is a segment
$I\subset\beta$ such that $|I|\geq2\e^{-\alpha T}$ and if $z$ denotes
the middle of $I$, we have 
\[
\forall\gamma\in\scC_{g}(T)\setminus\beta,\quad B_{g}(z,\epsilon)\cap\gamma=\emptyset\quad\mbox{and}\quad B_{g}(z,\epsilon)\cap\beta=I_{z}\cap B_{g}(z,\epsilon).
\]
This concludes the proof of Proposition \ref{thm: conformal place}.
We end this section by a straightforward corollary : 
\begin{cor}
\label{cor: boules separees}Let $g\in\cM(\eps_{0})$, $\eps>0$ and
$\alpha\geq2\kappa+h+\eps$ be as above. For $T\geq T_{0}(g_{0},\eps_{0},\eps)$
and each $\gamma\in\scC_{g}(T)$, there is $z_{\gamma}\in\gamma$
such that 
\[
B_{g}(z_{\gamma},\epsilon/2)\cap\left(\bigcup_{\gamma'\in\scC_{g}(T)\setminus\gamma}\theta_{\gamma'}^{\epsilon/2}\right)=\emptyset,
\]
and $B_{g}(z_{\gamma},\epsilon/2)\cap\gamma$ consists in a unique
geodesic segment centered at $z_{\gamma}$. In particular, if $\gamma,\gamma'\in\scC_{g}(T)$
are distinct, then $B_{g}(z_{\gamma},\epsilon/2)\cap B_{g}(z_{\gamma'},\epsilon/2)=\emptyset.$
\end{cor}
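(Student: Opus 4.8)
The plan is to deduce Corollary \ref{cor: boules separees} directly from Proposition \ref{thm: conformal place} together with the phase-space separation already established. First I would apply Proposition \ref{thm: conformal place} to each $\gamma\in\scC_{g}(T)$ with the \emph{same} choice of $\alpha=2\kappa+h+\eps$ (and the same $T_0$, valid uniformly over $g\in\cM_k(\eps_0)$): this produces a point $z_\gamma\in\gamma$ and a radius $\epsilon=\e^{-\alpha T}$ such that $B_g(z_\gamma,\epsilon)\cap\gamma'=\emptyset$ for every $\gamma'\in\scC_g(T)\setminus\gamma$, and $B_g(z_\gamma,\epsilon)\cap\gamma$ is a single geodesic segment centered at $z_\gamma$. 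The point $z_\gamma$ in the statement of the corollary is exactly this point, and since shrinking the radius from $\epsilon$ to $\epsilon/2$ only strengthens the emptiness conclusions, the segment property is inherited as well (it is still a single geodesic segment centered at $z_\gamma$, being the intersection of a smaller ball with $\gamma$).

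The only genuinely new point is to upgrade ``$B_g(z_\gamma,\epsilon)\cap\gamma'=\emptyset$'' to ``$B_g(z_\gamma,\epsilon/2)\cap\theta_{\gamma'}^{\epsilon/2}=\emptyset$''. This is just the triangle inequality: if some $w\in B_g(z_\gamma,\epsilon/2)\cap\theta_{\gamma'}^{\epsilon/2}$ existed, then there would be a point $x\in\gamma'$ with $d_g(w,x)<\epsilon/2$, hence $d_g(z_\gamma,x)\le d_g(z_\gamma,w)+d_g(w,x)<\epsilon$, contradicting $B_g(z_\gamma,\epsilon)\cap\gamma'=\emptyset$. So the $\epsilon/2$-reduction is precisely what buys the room to fatten $\gamma'$ into its $\epsilon/2$-tube while keeping the intersection empty.

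For the final assertion, suppose $\gamma\neq\gamma'$ in $\scC_g(T)$ and some $w\in B_g(z_\gamma,\epsilon/2)\cap B_g(z_{\gamma'},\epsilon/2)$. Since $z_{\gamma'}\in\gamma'$, the point $w$ lies within $\epsilon/2$ of $z_{\gamma'}\in\gamma'$, so $w\in\theta_{\gamma'}^{\epsilon/2}$; but $w\in B_g(z_\gamma,\epsilon/2)$ as well, contradicting the displayed emptiness just proved. Hence the two balls are disjoint. I do not expect any real obstacle here — this corollary is a bookkeeping consequence of Proposition \ref{thm: conformal place}, and the whole content is the choice of a uniform $\alpha$ and $T_0$ (guaranteed by the uniformity statements in Propositions \ref{prop: Phase-space-separation} and \ref{prop: Covering segments}) plus two applications of the triangle inequality. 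The one thing to be careful about is that $z_\gamma$ must be a \emph{single} point serving all the comparisons simultaneously, which is automatic since Proposition \ref{thm: conformal place} already quantifies over all $\gamma'\neq\beta$ at once.
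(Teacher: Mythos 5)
Your argument is correct and is exactly the intended (and in the paper, omitted as ``straightforward'') derivation: apply Proposition \ref{thm: conformal place} to each $\gamma$ to get $z_\gamma$ with $B_g(z_\gamma,\epsilon)\cap\gamma'=\emptyset$, then halve the radius and use the triangle inequality to absorb the $\epsilon/2$-fattening of $\gamma'$, with the ball-disjointness following since $B_g(z_{\gamma'},\epsilon/2)\subset\theta_{\gamma'}^{\epsilon/2}$. No gaps.
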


\section{Proof of Theorem \ref{thm: split} \label{sub: last proof}}

For $T>0$ large enough, Corollary \ref{cor: boules separees} of
Proposition \ref{thm: conformal place} allow to perturb the metric
near a point of a closed geodesic in $\scC_{g}(T)$ without changing
the length of all the others in $\scC_{g}(T)$. Before exploiting
further this property to separate the length spectrum, we recall without
proof a standard fact for conformal perturbations of a given metric
$g$ near a closed geodesic. 
\begin{lem}
\label{lem: conformal dilation}Let $\gamma\in\scC_{g}$ and $x\in\gamma$.
Let also \textbf{$B_{g}(x,r_{0})$} be an open ball\textbf{ }of radius
$r_{0}<r_{\inj}(g)/2$ centered at $x$. Assume that $\gamma\in\scC_{g}$
is parametrized such that $\gamma(0)=x$ and 
\[
\gamma\cap B_{g}(x,r_{0})=\{\pi\circ\Phi^{t}(\gamma(0),\dot{\gamma}(0)),\,|t|<r_{0}\}.
\]
Fix some arbitrary $\delta\in\R$. We can choose a (radial) function
$\chi_{0}=\chi(\cdot/r_{0})\in C_{0}^{\infty}(M)$ with $\supp\chi_{0}\subset B_{g}(x,r_{0})$
such that for the metric $\ti g\defeq(1+\frac{\delta}{r_{0}}\chi_{0})^{2}g$,
the curve $\gamma$ is still a $\ti g$-closed geodesic and 
\begin{equation}
\ell_{\ti g}(\gamma)=\ell_{g}(\gamma)+\delta.\label{eq: perturb conforme}
\end{equation}
Moreover, for any $k\geq0$ we have 
\begin{equation}
\|g-\ti g\|_{C^{k}}\leq C_{k}\delta r_{0}^{-(k+1)}\|g\|_{C^{k}}\label{eq: perturb conforme Cr}
\end{equation}
where $C_{k}$ is independent of $r_{0}$. 
\end{lem}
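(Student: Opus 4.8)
The plan is to prove Lemma \ref{lem: conformal dilation} by a direct, explicit construction of the conformal factor, relying on the standard first-variation behavior of lengths under conformal perturbations supported near a geodesic arc.

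\medskip

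First I would fix normal coordinates centered at $x$ so that $B_g(x,r_0)$ is identified with a Euclidean ball, and the geodesic $\gamma$ passes through as the segment $\{(t,0,\dots,0):|t|<r_0\}$. Given a radial profile $\chi\in C_0^\infty(\R)$ with $\chi\equiv 1$ near $0$ and $\supp\chi\subset(-1,1)$, set $\chi_0(y)=\chi(|y|_g/r_0)$ so that $\supp\chi_0\subset B_g(x,r_0)$ and $\chi_0\equiv 1$ near $x$. Consider the conformal metric $\ti g = (1+\lambda\chi_0)^2 g$ with $\lambda$ a real parameter to be related to $\delta$. The first key point is that $\gamma$ remains a $\ti g$-geodesic: this is the classical fact that a geodesic of $g$ lying in a region where the conformal factor is \emph{locally constant along the curve} (here $\chi_0\equiv 1$ in a neighborhood of the portion of $\gamma$ meeting $\supp\nabla\chi_0$ is \emph{not} quite what happens, so more carefully: the relevant statement is that an unparametrized $g$-geodesic whose image is a level set direction of the conformal factor remains a geodesic — here we use that $\chi_0$ is \emph{radial} about $x$, hence constant on the sphere $d_g(\cdot,x)=\mathrm{const}$, and $\gamma$ hits these spheres orthogonally only at... ). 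The cleanest justification is: for a conformal change $\ti g = e^{2\varphi}g$, a unit-speed $g$-geodesic $\gamma$ with $\varphi$ constant on $\gamma$ \emph{and} with $\nabla\varphi$ vanishing tangentially to $\gamma$ remains a pregeodesic; since $\chi_0$ is radial and $\gamma$ is a radial geodesic through the center $x$, along $\gamma$ we have $\chi_0(\gamma(t))=\chi(|t|/r_0)$, which is an even function of $t$, so at $t=0$ its derivative vanishes, but not elsewhere. So this naive claim fails, and instead one must allow $\gamma$ to be \emph{reparametrized}: any $g$-geodesic is, after reparametrization, a $\ti g$-geodesic precisely when... no. The correct and standard statement, which I would simply invoke, is: if $\varphi$ depends only on the $g$-distance to the geodesic $\gamma$ (a \emph{tubular} dependence), i.e. $\varphi$ is constant on the normal exponential image of each radius-$\rho$ tube around $\gamma$, then $\gamma$ is still a $\ti g$-geodesic, because $\nabla\varphi \perp \dot\gamma$ along $\gamma$ and $\varphi$ is constant along $\gamma$, which is exactly the vanishing of the geodesic-curvature correction $\ddot\gamma^k + \ti\Gamma^k_{ij}\dot\gamma^i\dot\gamma^j = 2(\partial_i\varphi\,\dot\gamma^i)\dot\gamma^k - (\partial^k\varphi)|\dot\gamma|^2_g + \dots$. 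I would therefore take $\chi_0$ to depend only on the distance to the \emph{geodesic} $\gamma$ within $B_g(x,r_0)$ (rather than to the point $x$), which still has support in the ball and is still a smooth radial-type cutoff; with that choice $\gamma$ stays a $\ti g$-geodesic exactly.

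\medskip

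The second step is the length identity \eqref{eq: perturb conforme}. With $\ti g = (1+\tfrac{\delta}{r_0}\chi_0)^2 g$ and $\chi_0$ depending only on $d_g(\cdot,\gamma)$, along $\gamma$ we have $\chi_0(\gamma(t))=\chi(0)=1$ for all $|t|<r_0$ where... no — again $\chi_0$ restricted to $\gamma$ is identically its maximal value since $\gamma$ lies on the axis $d_g(\cdot,\gamma)=0$. Hence $\ti g$ restricted to the tangent of $\gamma$ equals $(1+\tfrac\delta{r_0})^2 g$ on the whole arc of length $2r_0$ inside the ball, giving $\ell_{\ti g}(\gamma) - \ell_g(\gamma) = \int_{\gamma\cap B_g(x,r_0)} \big((1+\tfrac{\delta}{r_0}\chi_0) - 1\big)\,ds$. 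Since $\chi_0\equiv 1$ on $\gamma$ only where $\chi$ attains $1$, I should instead pick $\chi$ so that $\int_{-1}^1 \chi(u)\,du = 1$ and rescale: more precisely choose the profile so that $\int_{\gamma\cap B_g(x,r_0)} \chi_0\,ds = r_0$; then the length increment is exactly $\tfrac{\delta}{r_0}\cdot r_0 = \delta$. This is just normalization of the bump and is routine. The estimate \eqref{eq: perturb conforme Cr} is then immediate: $g - \ti g = \big((1+\tfrac\delta{r_0}\chi_0)^2 - 1\big)g = \big(\tfrac{2\delta}{r_0}\chi_0 + \tfrac{\delta^2}{r_0^2}\chi_0^2\big)g$, and each spatial derivative of $\chi_0 = \chi(d_g(\cdot,\gamma)/r_0)$ costs a factor $r_0^{-1}$, so $\|g-\ti g\|_{C^k} \lesssim_k |\delta| r_0^{-1}\cdot r_0^{-k}\|g\|_{C^k} = C_k|\delta| r_0^{-(k+1)}\|g\|_{C^k}$, with $C_k$ depending only on the fixed profile $\chi$ and the dimension, hence independent of $r_0$ (absorbing the $\delta^2/r_0^2$ term into the same bound for $|\delta|$ bounded, or stating the estimate for $|\delta|\le r_0$ which is the regime of interest).

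\medskip

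The main obstacle, as the discussion above signals, is getting the invariance of $\gamma$ as a geodesic stated and justified cleanly: one must make sure the conformal factor is constant along $\gamma$ and has gradient normal to $\gamma$ along $\gamma$, which forces the cutoff to be built from the distance-to-$\gamma$ function (a tubular cutoff) rather than naively from the distance-to-$x$ function, while still being supported in $B_g(x,r_0)$ and radial in the ball. Once that choice is pinned down, the first-variation computation for a conformal change and the derivative counting for the $C^k$ bound are entirely standard, which is why the paper states the lemma "without proof"; accordingly I would present the construction of $\chi_0$ explicitly, cite the standard conformal-geodesic fact (e.g. as in the computation of Christoffel symbols under $\ti g = e^{2\varphi}g$), and leave the length and norm computations as the short verifications they are.
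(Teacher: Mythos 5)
The paper states this lemma explicitly \emph{without proof}, so there is no in-paper argument to compare against; I am judging your construction on its own. Your overall strategy (write the conformal factor explicitly, use the transformation of the Christoffel symbols under $\ti g=\e^{2\varphi}g$ to check that $\gamma$ survives as a geodesic, compute the length change by first variation, and count derivatives for the $C^{k}$ bound) is certainly the intended one.

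However, there is a genuine error at the key step. The correct criterion for an unparametrized $g$-geodesic to remain a $\ti g$-geodesic (after reparametrization) under $\ti g=\e^{2\varphi}g$ is that the component of $\nabla\varphi$ \emph{normal} to $\dot{\gamma}$ vanishes along $\gamma$: the defect term is $2\,d\varphi(\dot{\gamma})\,\dot{\gamma}-|\dot{\gamma}|_{g}^{2}\nabla\varphi$, which is proportional to $\dot{\gamma}$ exactly when $\nabla\varphi$ is tangent to $\gamma$. Tangential variation of $\varphi$ along $\gamma$ is harmless; it only forces a reparametrization, and the lemma (like the whole paper) treats closed geodesics as unparametrized curves. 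You instead work with ``$\varphi$ constant along $\gamma$'' as the relevant condition, and on that basis you reject the radial cutoff $\chi_{0}=\chi(d_{g}(\cdot,x)/r_{0})$ centered at the \emph{point} $x$ --- which is precisely the function the lemma prescribes. In fact that cutoff works: since $\gamma\cap B_{g}(x,r_{0})$ is a radial geodesic through the center, the Gauss lemma gives $\nabla\chi_{0}(\gamma(t))=r_{0}^{-1}\chi'(|t|/r_{0})\sgn(t)\dot{\gamma}(t)$, i.e.\ $\nabla\chi_{0}$ is tangent to $\gamma$ along $\gamma$ (and vanishes at $x$ if $\chi$ is constant near $0$, which is also needed for smoothness of the radial function). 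Your substitute --- a function of $d_{g}(\cdot,\gamma)$ --- does not repair anything and introduces a new problem you do not resolve: its support is a tube around the \emph{entire closed geodesic}, not a subset of $B_{g}(x,r_{0})$, so one must reintroduce a tangential cutoff (or use the distance to the segment $\gamma\cap B_{g}(x,r_{0})$ with some care near its endpoints), and once that is done your claim that $\chi_{0}$ is identically its maximal value on $\gamma\cap B_{g}(x,r_{0})$ also fails. The remaining ingredients are fine: the normalization $\int_{\gamma}\chi_{0}\,ds=r_{0}$ gives $\ell_{\ti g}(\gamma)-\ell_{g}(\gamma)=\frac{\delta}{r_{0}}\int_{\gamma}\chi_{0}\,ds=\delta$ exactly, and the derivative count $\|g-\ti g\|_{C^{k}}\lesssim_{k}|\delta|r_{0}^{-(k+1)}\|g\|_{C^{k}}$ is correct, including your (legitimate) caveat that the quadratic term $\delta^{2}/r_{0}^{2}$ requires $|\delta|\lesssim r_{0}$, which is the regime in which the lemma is applied in Section \ref{sub: last proof}. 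To fix the proof, simply state the tangency criterion correctly and keep the radial cutoff about $x$ from the statement of the lemma.
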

We now pass to the proof of Theorem \ref{thm: split}. To begin with,
let us fix some $\eps>0$ and choose 
\[
\alpha=2\kappa+h+\frac{\eps}{2(k+1)}\,,\quad k\geq2.
\]
Let $T_{0}(g_{0},\eps_{0},\eps,k)>0$ be a positive number such that
Proposition \ref{thm: conformal place} with the above value of $\alpha$
is satisfied for $T\geq T_{0}$, and set $T_{n}=T_{0}+n$, $n\in\N$.

For $\nu>0$ to be defined soon below and $n\geq1$, we will say that
$\scC_{g_{n}}([T_{0},T_{n}])$ is $\nu$-separated if for all distinct
$\ell,\ell'\in\scL_{g_{n}}([T_{0},T_{n}])$, we have 
\[
|\ell-\ell'|\geq\e^{-\nu T_{n}}.
\]
We proceed iteratively : once a metric $g_{n-1}\in\cM_{k}(\eps_{0})$
($n>1$) is constructed such that $\scC_{g_{n-1}}([T_{0},T_{n-1}])$
is $\nu$-separated, we consider the next interval $]T_{n-1},T_{n}]$
and build a metric $g_{n}$ from $g_{n-1}$ such that $\scC_{g_{n}}([T_{0},T_{n}])$
is $\nu$-separated.

To do so, consider $]T_{n-1},T_{n}]$ and $n\geq1$. Denote the set
of closed geodesics with length in $]T_{n-1},T_{n}]$ by 
\[
\scC_{g_{n-1}}(]T_{n-1},T_{n}])=\{\gamma_{1},\dots,\gamma_{\mu_{n}}\}.
\]
By a box principle, in view of \eqref{eq: Margu grossier} there is
at least one couple of distinct $\ell^{-},\ell^{+}\in\scL_{g_{n-1}}(]T_{n-1},T_{n}])$
such that $\ell^{+}-\ell^{-}\geq C_{0}^{-1}\e^{-hT_{n}}.$ Setting
$\ell_{i}=\ell(\gamma_{i})$, we order the points of $\scL_{g_{n-1}}\cap]T_{n-1},T_{n}]$
so that 
\[
T_{n-1}<\ell_{1}\leq\dots\leq\ell_{m}=\ell^{-}<\ell^{+}=\ell_{m+1}\leq\dots\leq\ell_{\mu_{n}}\leq T_{n}.
\]
Let us define $\epsilon_{n}=\e^{-\alpha T_{n+1}}/2$ and choose $z_{i}\in\gamma_{i}$
according to Corollary \ref{cor: boules separees}. In particular,
$B_{g_{n-1}}(z_{i},\epsilon_{n})$ does not intersect the open $\epsilon_{n}-$neighborhood
of any $\gamma\in\scC_{g_{n-1}}(T_{n+1})\setminus\gamma_{i}$, and
$B_{g_{n-1}}(z_{i},\epsilon_{n})\cap\gamma_{i}$ consists in a single
geodesic segment centered at $z_{i}$. We are exactly in the settings
of Lemma \ref{lem: conformal dilation} : in the ball $B_{g_{n-1}}(z_{i},\epsilon_{n})$,
we dilate the metric by a factor $(1+\epsilon_{n}^{-1}\delta_{n,i}\chi_{n,i})^{2}$,
where $\chi_{n,i}$ plays the role of $\chi_{0}$ in this lemma. The
constants $\delta_{n,i}$ are taken such that 
\[
\delta_{n,i}\defeq\begin{cases}
i\e^{-\nu T_{n}} & \mbox{if }1\leq i\leq m,\\
-(\mu_{n}-i+1)\e^{-\nu T_{n}} & \mbox{if }m+1\leq i\leq\mu_{n}.
\end{cases}
\]
In this way, the geodesics $(\gamma_{i})_{1\leq i\leq\mu_{n}}$ have
their lengths dilated (for $1\leq i\leq m$) or contracted (for $m+1\leq i\leq\mu_{n}$)
to $\ti\ell_{1},\dots,\ti\ell_{\mu_{n}}$ with 
\begin{align*}
\ti\ell_{i} & =\ell_{i}+i\e^{-\nu T_{n}}\qquad1\leq i\leq m,\\
\ti\ell_{j} & =\ell_{j}-(\mu_{n}-j+1)\e^{-\nu T_{n}},\qquad m+1\leq j\leq\mu_{n}.
\end{align*}
Define now 
\[
g_{n}=\prod_{i=1}^{\mu_{n}}(1+\frac{\delta_{n,i}}{\epsilon_{n}}\chi_{n,i})^{2}g_{n-1}=\left(\prod_{p=1}^{n}\prod_{i=1}^{\mu_{p}}\e^{2\log(1+\frac{\delta_{p,i}}{\epsilon_{p}}\chi_{p,i})}\right)g_{0}\defeq\e^{2F_{n}}g_{0}.
\]
To simplify the notations below, we write $f_{n,i}\defeq\delta_{n,i}\epsilon_{n}^{-1}\chi_{n,i}$.
By $C_{k}$, we will denote a positive constant depending only on
$g_{0},\eps_{0},\eps,k$ whose value may change from line to line.
Equation \eqref{eq: Margu grossier} gives $\mu_{n}\leq C_{0}\e^{hT_{n}}/T_{n}$,
so we have 
\[
\left|\frac{\delta_{n,i}}{\epsilon_{n}}\right|\leq\frac{2C_{0}\e^{\alpha}}{T_{n}}\e^{(h+\alpha-\nu)T_{n}}.
\]
We deduce from this equation and Lemma \ref{lem: conformal dilation}
that $f_{n,i}$ satisfies 
\begin{equation}
\|f_{n,i}\|_{C^{k}}\leq C_{k}\frac{\e^{-(\nu-h-(k+1)\alpha)T_{n}}}{T_{n}},\quad k\geq0.\label{eq: estimate f}
\end{equation}
For $k\geq1$, we have

\[
\frac{d^{k}}{dx^{k}}\log(1+f)=\frac{P_{r}(f^{(k)},\dots,f)}{(1+f)^{2^{k-1}}},
\]
where $P_{k}$ is a polynomial of degree $2^{k-1}$. Using \eqref{eq: estimate f}
and $T_{0}$ large enough, we then get 
\[
\|P_{k}(f_{n,i}^{(k)},\dots,f_{n,i})\|_{C^{0}}\leq C{}_{k}\frac{\e^{-(\nu-h-(k+1)\alpha)T_{n}}}{T_{n}}.
\]
Remark that for a given $n$, Corollary \ref{cor: boules separees}
ensures that $\supp f_{n,i}\cap\supp f_{n,j}=\emptyset$ if $j\neq i$.
This yields to 
\begin{equation}
\|\sum_{i=1}^{\mu_{n}}\log(1+f_{n,i})\|_{C^{k}}\leq C_{k}\frac{\e^{-(\nu-h-(k+1)\alpha)T_{n}}}{T_{n}},\quad k\geq0.\label{eq: estimate one step}
\end{equation}
If the constant $\nu$ satisfies 
\begin{equation}
\nu=h+(k+1)\alpha+\eps/2=(k+2)h+2(k+1)\kappa+\eps,\label{eq: condition nu}
\end{equation}
we have finally 
\[
\|F_{n}\|_{C^{k}}=\|\sum_{p=0}^{n}\sum_{i=1}^{\mu_{n}}\log(1+f_{n,i})\|_{C^{k}}\leq C_{k}\sum_{p=0}^{\infty}\frac{\e^{-(\nu-h-(k+1)\alpha)T_{p}}}{T_{p}}\leq C_{k}\frac{\e^{-(\nu-h-(k+1)\alpha)T_{0}}}{T_{0}}.
\]
From the above remarks, we end up with 

\begin{eqnarray}
\|(\e^{2F_{n}}-1)g_{0}\|_{C^{k}} & \leq & C(g_{0},\eps_{0},\eps,k)\frac{\e^{-(\nu-h-(k+1)\alpha)T_{0}}}{T_{0}}\,,\quad k\geq2.\label{eq: choice T0}
\end{eqnarray}
In particular, if $T_{0}=T_{0}(g_{0},\eps_{0},\eps,k)$ is large enough,
the right hand side of the previous equation is $<\eps_{0}$ and we
have $g_{n}\in\cM_{k}(\eps_{0})$ for all $n\in\N$.

By construction, the metric perturbation at step $n$ avoids any $\epsilon_{n}$-neighborhood
of $\scC_{g_{n-1}}(T_{n-1})$ and $\scC_{g_{n-1}}(]T_{n},T_{n+1}])$
: as a result, closed $g_{n-1}$-geodesics with length in $[T_{0},T_{n-1}]\cup]T_{n},T_{n+1}]$
remain unchanged for $g_{n}$. This means that : 
\begin{equation}
\scC_{g_{n-1}}(T_{n-1})\subset\scC_{g_{n}}(T_{n-1})\quad\text{and}\quad\scC_{g_{n-1}}(]T_{n},T_{n+1}])\subset\scC_{g_{n}}(]T_{n},T_{n+1}]).\label{eq: inclusions LSP}
\end{equation}
To have equalities, it remains to check that closed $g_{n-1}$-geodesics
with length $>T_{n+1}$ are not mapped by $f_{g_{n-1}\to g_{n}}$
to $g_{n}$-closed geodesics with length in $[0,T_{n}]$ \textendash{}
recall equation \eqref{eq: bijection of geodesics}. To see this,
let $\gamma\in\scC_{g_{n-1}}(]T_{n+1},+\infty[)$, and write $\ti\gamma=f_{g_{n-1}\to g_{n}}(\gamma)$.
From \eqref{eq: length perturb}, we have 
\begin{equation}
\frac{T_{n+1}}{\sqrt{1+\Delta_{n}}}\leq\ell_{g_{n}}(\ti\gamma)\leq\ell_{g_{n-1}}(\gamma)\sqrt{1+\Delta_{n}},\qquad\Delta_{n}\defeq\|g_{n}-g_{n-1}\|_{C^{0}}.\label{eq: higher lengths}
\end{equation}
On the other hand, equations \eqref{eq: estimate one step} and \eqref{eq: choice T0}
give 
\[
0\leq\Delta_{n}\leq\frac{2C_{0}\e^{\alpha}}{T_{n}}\e^{(h+\alpha-\nu)T_{n}}\|g_{n-1}\|_{C^{0}}\leq\frac{\e^{-(\nu-\alpha-h)T_{n}}}{T_{n}}C(g_{0},\eps_{0},\eps,k).
\]
In particular, since $k\geq2$, \eqref{eq: condition nu} gives $\nu\geq h+3\alpha$.
In view of the fact that $\sqrt{1+\Delta_{n}}\leq1+C\Delta_{n}$ for
some $C>0$, if $T_{0}$ is sufficiently large we will get from the
previous equation $C\Delta_{n}T_{n}<1$, and then 
\begin{eqnarray*}
\frac{T_{n+1}}{\sqrt{1+\Delta_{n}}} & \geq & \frac{T_{n+1}}{1+C\Delta_{n}}>T_{n}.
\end{eqnarray*}
This means that $\ell_{g_{n}}(\ti\gamma)>T_{n}$ and we can conclude
that $\scL_{g_{n}}\cap]T_{n-1},T_{n}]$ is precisely $\ti\ell_{1},\dots\ti\ell_{\mu_{n}}$.

It is easily verified that $\scC_{g_{n}}(]T_{n-1},T_{n}])$ is $\nu$-separated
: we have indeed $\ti\ell_{i+1}-\ti\ell_{i}=\ell_{i+1}-\ell_{i}+\e^{-\nu T_{n}}$
for $i\leq m-1$ or $i\geq m+1$. Using \eqref{eq: Margu grossier}
and \eqref{eq: condition nu} we also get 
\[
\ti\ell_{m+1}-\ti\ell_{m}\geq C_{0}^{-1}\e^{-hT_{n}}-2\mu_{n}\e^{-\nu T_{n}}\geq\frac{1}{2}C_{0}^{-1}\e^{-h_{}T_{n}}>\e^{-\nu T_{n}}
\]
for $T_{0}$ large enough. As noted above, the closed $g_{n-1}$-geodesics
with length $\leq T_{n-1}$ have not been modified, so $\scC_{g_{n}}([T_{0},T_{n}])$
is $\nu$-separated if $\scC_{g_{n-1}}([T_{0},T_{n-1}])$ was.

It is now straightforward to check that we can start the above process
at some time $T_{0}=T_{0}(g_{0},\eps_{0},\eps,k)$ and get a sequence
of metrics $(g_{n})_{n\in\N}\in\cM_{k}(\eps_{0})$ with 
\[
g_{n}\xrightarrow[\|\cdot\|_{C^{k}}]{n\to\infty}g_{\infty},\qquad g_{\infty}\defeq\prod_{n=1}^{\infty}\prod_{i=1}^{\mu_{n}}(1+\frac{\delta_{n,i}}{\epsilon_{n}}\chi_{n,i})^{2}g_{0},\qquad\|g_{0}-g_{\infty}\|_{C^{k}}<\eps_{0}.
\]
By construction, $\scC_{g_{\infty}}([T_{0},+\infty[)$ is $\nu$-separated.
The conclusion of Theorem \ref{thm: split} now follows readily.

\medskip{}

\noun{Acknowledgements : }The author thanks Pierre Bergé and Giona
Veronelli for stimulating discussions. This work has been partially
supported by the Agence Nationale de la Recherche, under the grant
Gerasic-ANR-13-BS01-0007-0.

\bibliographystyle{amsalpha}
\bibliography{ESbiblio_LSG}

\end{document}